\pgfplotsset{compat=1.15}
\newcommand{\R}{\mathbb{R}}%reals
\newcommand{\T}{\mathbb{T}}%circle
\DeclarePairedDelimiter{\paren}{(}{)}%parentheses
\DeclarePairedDelimiter{\abs}{\lvert}{\rvert}%absolute value
\DeclareMathOperator{\sgn}{sgn}%signum function
\newcommand{\eps}{\varepsilon}%epsilon
\let\div\relax
\DeclareMathOperator{\div}{div}%divergence
\newcommand{\grad}{\nabla}%gradient
\newcommand{\lap}{\Delta}%laplacian
\DeclarePairedDelimiter{\nm}{\|}{\|}%norm
\DeclarePairedDelimiter{\gen}{\langle}{\rangle}%L^2 inner product
\newcommand{\dH}{\dot{H}}%Homogeneous Sobolev space
\theoremstyle{definition}
\newtheorem{theorem}{Theorem}[section]
\newtheorem{lemma}[theorem]{Lemma}
\newtheorem{proposition}[theorem]{Proposition}
\newtheorem{remark}[theorem]{Remark}
\numberwithin{equation}{section}
\newcommand\numberthis{\addtocounter{equation}{1}\tag{\theequation}}%to number last line in align environment
\title[Finite time blow-up in a 1D model of the IPM equation]{Finite time blow-up in a 1D model of the incompressible porous media equation}
\author{Alexander Kiselev}
\address[A. Kiselev]{Department of Mathematics, Duke University, Durham NC 27708, USA}
\email{alexander.kiselev@duke.edu}
\author{Naji A. Sarsam}
\address[N. A. Sarsam]{Department of Mathematics, Duke University, Durham NC 27708, USA}
\email[author to whom any correspondence should be addressed]{naji.sarsam@duke.edu}
\begin{document}
%%%%%%%%%%%%%%%%%%%%%%%%%%%%%%%%%%%%%%%%%%%%%%%%%%%%%%%
%% SUMMARIZING INFO
%%%%%%%%%%%%%%%%%%%%%%%%%%%%%%%%%%%%%%%%%%%%%%%%%%%%%%%
\begin{abstract}
We derive a PDE that models the behavior of a boundary layer solution to the incompressible porous media (IPM) equation posed on the 2D periodic half-plane. This 1D IPM model is a transport equation with a non-local velocity similar to the well-known C\'{o}rdoba-C\'{o}rdoba-Fontelos (CCF) equation. We discuss how this modification of the CCF equation can be regarded as a reasonable model for solutions to the IPM equation. Working in the class of bounded smooth periodic data, we then show local well-posedness for the 1D IPM model as well as finite time blow-up for a class of initial data.
\end{abstract}
\subjclass[2020]{76B03, 35Q35, 76S05}
\keywords{one-dimensional \rm{fl}uid model, singularity formation, IPM equation}
\maketitle

%%%%%%%%%%%%%%%%%%%%%%%%%%%%%%%%%%%%%%%%%%%%%%%%%%%%%%%
%% INTRODUCTION
%%%%%%%%%%%%%%%%%%%%%%%%%%%%%%%%%%%%%%%%%%%%%%%%%%%%%%%
\section{Introduction}\label{sec_intro}

\subsection{Background and main results}\label{sec_intro1}

The 2D incompressible porous media (IPM) equation is an active scalar equation that models the transport of a scalar density $\rho(x,t)$ by an incompressible fluid velocity field $u(x,t)$ under the effects of Darcy's law and gravity. In the periodic half-plane $\Omega := \T \times (0, \infty)$ with $\T := [-\pi, \pi)$ denoting the circle, the initial value problem with a no-flux boundary condition is given by
\begin{equation}\label{eq_intro_IPM}
\begin{cases}
\partial_t \rho + (u \cdot \grad) \rho = 0,\\
u = -\grad p -(0, g \rho), \quad \div u = 0, \quad u_2|_{\partial \Omega} = 0, \\
\rho(\cdot, 0) = \rho_0,
\end{cases}
\end{equation}
where $p(x,t)$ is the scalar pressure, $g > 0$ is the constant of gravitational acceleration, and $\rho_0(x)$ is the initial density. We note that the functions $\rho$, $u$, and $p$ being defined on $\Omega$ means that they are $2\pi$-periodic in the $x_1$ variable and hence defined on the half plane $\R \times (0, \infty)$.

The IPM equation and its variants have garnered much interest in the literature. There have been results regarding the local well-posedness of IPM in various H\"{o}lder spaces \cite{CGO7} and Sobolev spaces \cite{C17} as well as a recent strong ill-posedness result in the critical space $H^2$ for small perturbations of a spectrally stable steady state \cite{BCM24}. There has also been much progress towards the global existence and asymptotic stability of solutions near stratified states \cites{BCP24, JK24, E17, CCL18}. Nonetheless, the question of finite time blow-up for IPM in the class of smooth solutions has remained open, with two notable results towards this goal including the demonstration of infinite-in-time growth of derivatives provided global-in-time existence \cite{KY23} and, very recently, the existence of a smooth compactly supported initial data and a compactly supported force in $L^\infty_t C^\infty_x$ that lead to finite-time blow-up in the forced IPM equation \cite{CM24}. Furthermore, the Muskat problem (a variation of the IPM equation that models the evolution of the interface between two incompressible, immiscible fluids of different densities) on the half-plane has been shown to exhibit finite time singularity formation arising from arbitrarily small initial data \cites{Z24a,Z24b}. This work has led the authors of this note to suspect that (\ref{eq_intro_IPM}) may allow for the finite time blow-up of smooth data in the periodic half-plane $\Omega$ due to the presence of a boundary, where the blow-up data looks negligible far in the interior while forming a boundary layer with the boundary trace looking like the profile in \Cref{fig_intro_InitialData}.

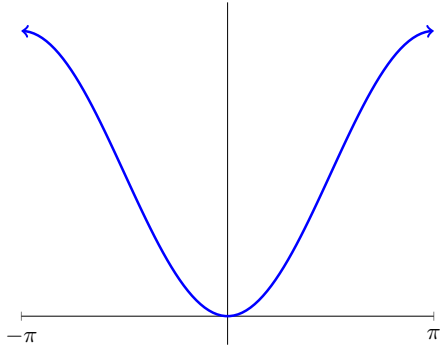
\begin{figure}[!ht]
\centering
\begin{tikzpicture}[yscale=.8,xscale=.8]
    \begin{axis}[
		xmin=-pi,
        xmax=pi,
		xtick={-pi, 0, pi},
        xticklabels={$-\pi$, , $\pi$},
        axis x line=middle,
        ymin=-0.2,
        ymax=2.2,
        ymajorticks=false,
        axis y line=middle,
        axis line style= -,
        ]
        \addplot[no marks,very thick,blue,<->] expression[domain=-pi:pi,samples=100]{1-cos(deg(x))};
    \end{axis}
\end{tikzpicture}
\caption{Example boundary trace of initial data suspected to lead to finite time blow-up in (\ref{eq_intro_IPM}). The boundary trace is even, attains 0 at the origin, and is increasing from $x_1 = 0$ to $x_1 = \pi$ (cf. \Cref{thm_intro_FTBU}).}
\label{fig_intro_InitialData}
\end{figure}

This suspected smooth blow-up data for the IPM equation parallels the proposed Hou-Luo scenario for the 3D axisymmetric Euler equation and 2D Boussinesq equation on domains with boundary \cites{HL14a,HL14b}. One avenue through which researchers gained an initial understanding of the Hou-Luo scenario was the derivation and study of 1D models for the evolution of the trace of solutions to 3D axisymmetric Euler in \cites{CKY14,CHKLSY15,DKX16}. These three 1D models were all shown to exhibit finite time blow-up for respectively appropriate smooth initial data. Since then, computer assisted proofs have been proposed which establish the Hou-Luo blow-up scenario for smooth finite-energy solutions to the true 3D axisymmetric Euler equation in a cylindrical domain \cites{CH22,CH23}.

Altogether, the study of 1D models as analogies for fluid equations has a rich history. Arguably, the first such model was the Burgers equation \cite{KNS08}, with later popular models including the Constantin-Lax-Majda (CLM) equation \cite{CLM85}, the De Gregorio equation \cite{D90}, the C\'{o}rdoba-C\'{o}rdoba-Fontelos (CCF) equation \cite{CCF07}, and the Okamoto-Sakajo-Wunsch equation (sometimes also called the OSW equation or the generalized De Gregorio/CLM equation) \cite{OSW08}. These equations focused on how best to model the competing effects of transport and vortex stretching. The later models addressing the Hou-Luo scenario, as summarized in \cite{CHKLSY15}, instead focus on how to derive a 1D approximation for the 2D Biot-Savart law applied to boundary layer solutions.

Somewhat surprisingly, a 1D model analogy for the IPM equation is currently absent from the literature, to the authors' knowledge. So, in this note, we propose and study the following active scalar transport equation with non-local velocity posed on the circle $\T$,
\begin{equation}\label{eq_intro_1DIPM}
\partial_t \rho + u \partial_x \rho = 0, \quad u = g H_a \rho, \quad \rho(\cdot, 0) = \rho_0,
\end{equation}
as a model for the evolution of the boundary trace of a solution to (\ref{eq_intro_IPM}) that is negligible in the interior of $\Omega$. In (\ref{eq_intro_1DIPM}), $g > 0$ is the constant of gravitational acceleration, $\rho(x,t)$ is the scalar boundary density, $\rho_0(x)$ is the initial boundary density, and $u(x,t)$ is the scalar 1D fluid velocity whose Biot-Savart law is given by a singular integral operator $H_a$ defined as follows:
\begin{equation}\label{eq_intro_KaKernel}
H_af(x) := P.V. \int_\R f(x-y)K_a(y)\, dy, \quad K_a(y) := \frac{1}{\pi y} \frac{a^2}{y^2+a^2},
\end{equation}
where $a > 0$ is a fixed constant. Note that any functions defined on $\T$ are considered to be $2\pi$-periodic functions over the real line, which allows for the use of $H_a \rho$ in (\ref{eq_intro_1DIPM}). We construct this model in \Cref{sec_const} by following the same scheme used to derive the models discussed in \cites{HL14a,HL14b,CHKLSY15,DKX16}. In \Cref{sec_const}, we also remark on the properties of the operator $H_a$ and how (\ref{eq_intro_1DIPM}) parallels the well-known CCF equation, whose well-posedness and singularity formation has been studied in \cites{BLM95,M98,LR08,CCF07,D08,SV16,HR16} among other papers. Our discussion suggests that CCF may indeed be a reasonable 1D model for boundary evolution under IPM.

In \Cref{sec_LWP}, we establish our first result.
\begin{theorem}\label{thm_intro_LWP}
For any fixed choice of $a, g > 0$, the 1D model (\ref{eq_intro_1DIPM}) is locally well-posed in time with respect to the class $C^\infty(\T)$ of smooth $2\pi$-periodic functions on the real line.
\end{theorem}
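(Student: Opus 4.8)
The plan is to prove local well-posedness by the standard energy method for quasilinear transport equations, working in high-order Sobolev spaces $H^s(\mathbb{T})$ and then upgrading to $C^\infty$. Since smooth periodic functions embed into every $H^s$, and conversely $\bigcap_s H^s = C^\infty$, it suffices to establish local existence, uniqueness, and a blow-up criterion at the level of $H^s$ for each sufficiently large $s$ (say $s \geq 2$), together with persistence of regularity so that the existence time does not shrink as $s$ increases. Concretely, I would fix initial data $\rho_0 \in C^\infty(\mathbb{T})$, regard it as an element of $H^s$, and run an a priori estimate on $\nm{\rho}_{H^s}$.

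The key analytic input is control of the velocity $u = g H_a \rho$ in terms of $\rho$. The first step is therefore to record the mapping properties of the operator $H_a$ defined in \eqref{eq_intro_KaKernel}: I expect $H_a$ to be bounded on $H^s(\mathbb{T})$ for every $s$, and in fact to gain or at least preserve regularity, since the kernel $K_a(y) = \frac{1}{\pi y}\frac{a^2}{y^2+a^2}$ behaves like the Hilbert transform kernel near $y=0$ but decays like $y^{-3}$ at infinity. The cleanest route is to compute the Fourier multiplier of $H_a$ on the circle (a smoothed/truncated version of the $-i\,\sgn(k)$ multiplier of the Hilbert transform) and read off $\nm{H_a \rho}_{H^s} \lesssim \nm{\rho}_{H^s}$ directly, which also immediately gives the needed bound $\nm{u}_{W^{1,\infty}} \lesssim \nm{u}_{H^s} \lesssim \nm{\rho}_{H^s}$ by Sobolev embedding for $s > 3/2$.

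With this in hand, the core step is the energy estimate. Applying $\partial_x^s$ (or $\Lambda^s$) to the equation $\partial_t \rho + u\,\partial_x \rho = 0$, pairing with $\partial_x^s \rho$ in $L^2$, and integrating by parts in the transport term, the top-order contribution $\int (\partial_x^s \rho)\, u\, \partial_x^{s+1}\rho\,dx$ is integrated by parts to produce $\frac{1}{2}\int (\partial_x u)(\partial_x^s\rho)^2\,dx$, which is harmless. The remaining commutator terms are controlled by the Kato–Ponce / Moser-type estimate $\nm{[\partial_x^s, u]\partial_x \rho}_{L^2} \lesssim \nm{\partial_x u}_{L^\infty}\nm{\rho}_{H^s} + \nm{u}_{H^s}\nm{\partial_x \rho}_{L^\infty}$, and then bounded using the boundedness of $H_a$ on $H^s$. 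This yields the closed differential inequality $\frac{d}{dt}\nm{\rho}_{H^s}^2 \lesssim \nm{\rho}_{H^s}^3$, which gives a local existence time depending only on $\nm{\rho_0}_{H^s}$.

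To make this rigorous I would pass through a regularized system — for instance mollifying the velocity or adding artificial viscosity $\eps \partial_x^2 \rho$ — to obtain genuine smooth solutions, derive the above $\eps$-uniform bounds, and extract a limit by a standard compactness/Aubin–Lions argument, with uniqueness following from an $L^2$ energy estimate on the difference of two solutions (again using the $H_a$ bounds to control the difference of velocities). The blow-up criterion $\int_0^T \nm{\partial_x u}_{L^\infty}\,dt = \infty$ then propagates the existence time uniformly across all $s$, yielding the $C^\infty$ statement. The main obstacle I anticipate is not the abstract scheme but establishing sharp enough mapping properties of $H_a$: I must confirm that $H_a$ genuinely preserves $H^s$ with the clean commutator structure needed, and in particular that the non-standard decay and the periodization (summing the kernel over $2\pi\mathbb{Z}$) do not spoil the Fourier-multiplier computation that makes all the estimates above close.
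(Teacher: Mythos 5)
Your proposal is correct and follows essentially the same route as the paper: a priori energy estimates in $H^s(\T)$, existence via a smoothing/regularization scheme plus compactness, uniqueness via an $L^2$ Gr\"{o}nwall estimate on the difference of two solutions, and persistence of regularity to upgrade to $C^\infty(\T)$. The only differences are technical: where you invoke the bounded Fourier symbol of $H_a$ (which the paper itself records in \Cref{sec_const2}, so transference to the periodic setting goes through and your periodization worry is harmless) together with a Kato--Ponce commutator estimate, the paper instead obtains the key velocity bound $\nm{H_a[\partial_x \rho]}_\infty \lesssim_{a,s} \nm{\rho}_{H^s}$ from the decomposition of $H_a$ as the Hilbert transform minus a smoothing operator (see \eqref{eq_const_HsToLinfty}), and closes the $\dH^s$ estimate by an explicit Leibniz expansion with Gagliardo--Nirenberg interpolation, tracking the conserved $L^\infty$ norm separately.
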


Our main result shows that the family of initial data to (\ref{eq_intro_1DIPM}) depicted in \Cref{fig_intro_InitialData} undergo finite time blow-up in the class $C^\infty(\T)$, which we prove in \Cref{sec_FTBU}.
\begin{theorem}\label{thm_intro_FTBU}
Suppose $\rho_0$ is non-identically zero as well as smooth, bounded, nonnegative, and even on $\T$. Moreover, suppose $\rho_0(0) = 0$ and that $\rho_0' \ge 0$ on $[0, \pi)$. Then, letting $\rho$ denote the corresponding unique local-in-time solution to (\ref{eq_intro_1DIPM}) on $\T$ for any fixed choice of $a, g > 0$, there exists a finite time $0 < T_* < \infty$ such that $\rho$ is nonnegative, bounded, and smooth for all $0 \le t < T_*$ while
\[
\lim_{t \to T_*} \int_0^t \nm{\partial_x \rho(\cdot,s)}_\infty\, ds = \infty.
\]
In particular, $\rho(x,t)$ cannot remain in $C^\infty(\T)$ for all time.
\end{theorem}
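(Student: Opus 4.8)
The plan is to use the transport structure to show that, for the class of data in the statement, the flow compresses all the mass toward the origin in finite time, which forces an unbounded gradient there. First I would record the symmetries propagated by the flow $\Phi$ (defined by $\partial_t\Phi=u(\Phi,t)$, $\Phi(\cdot,0)=\mathrm{id}$). Since $K_a$ is odd, $H_a$ maps even functions to odd functions, so $u=gH_a\rho$ is odd whenever $\rho$ is even; hence $u(0,t)=0$, and by $2\pi$-periodicity $u(\pi,t)=u(-\pi,t)=-u(\pi,t)=0$, so $x=0$ and $x=\pi$ are stationary. Transport then preserves evenness and nonnegativity, keeps $\rho(0,t)=0$ and $\rho(\pi,t)=\nm{\rho_0}_\infty=:M$, and—since $\partial_t\rho_x+u\partial_x\rho_x+u_x\rho_x=0$ gives $\rho_x(\Phi(x_0,t),t)=\rho_0'(x_0)\exp(-\int_0^t u_x\,ds)$ while characteristics started in $(0,\pi)$ cannot cross the fixed endpoints—keeps $\rho_x\ge0$ on $(0,\pi)$ for all $t$. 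I would also establish the pointwise identity, for $x\in(0,\pi)$,
\[
-u(x,t)=g\int_0^\pi \abs{\rho(z,t)-\rho(x,t)}\,\abs{\mathcal K_a(x,z)}\,dz\ge0,\qquad \mathcal K_a(x,z):=\widetilde K_a(x-z)+\widetilde K_a(x+z),
\]
where $\widetilde K_a$ is the $2\pi$-periodization of $K_a$; this follows by folding the even symmetry, subtracting the principal-value identity $\int_0^\pi \mathcal K_a(x,z)\,dz=0$ (oddness of $\widetilde K_a$), and using the sign pattern of $\mathcal K_a$ together with monotonicity. Thus the flow pushes mass monotonically toward the origin, and one checks $\rho(z,t)\ge\rho_0(z)$ at each fixed $z\in(0,\pi)$.

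Next I would reduce the theorem to a finite-time statement about a single level set. For fixed $\delta\in(0,M)$ let $x_\delta(t)$ be the leftmost point of $(0,\pi)$ with $\rho(x_\delta(t),t)=\delta$; since the profile is nondecreasing on $(0,\pi)$, this level set is carried by the flow, so $\dot x_\delta=u(x_\delta,t)\le0$. Because $\rho(0,t)=0$, the mean value theorem gives $\nm{\partial_x\rho(\cdot,t)}_\infty\ge\delta/x_\delta(t)$, whence
\[
\int_0^t \nm{\partial_x\rho(\cdot,s)}_\infty\,ds\ \ge\ \int_0^t \frac{\delta}{x_\delta(s)}\,ds.
\]
It therefore suffices to prove that $x_\delta(t)\to0$ as $t\to T_0$ for some finite $T_0$, with $1/x_\delta$ non-integrable at $T_0$; this simultaneously yields the finite blow-up time and the divergence of the stated integral (the continuation criterion from \Cref{thm_intro_LWP} then confirms loss of smoothness, but is not strictly needed).

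The crux is a lower bound of the form $-u(x_\delta(t),t)\ge c>0$, or better $\gtrsim g\delta\log(a/x_\delta)$, since $\dot x_\delta\le-c$ forces $x_\delta=0$ at some $T_0\le x_\delta(0)/c$ and makes $\int^{T_0}\delta/x_\delta\,ds$ diverge. The obstruction is that the far field of the velocity is only linear near the origin: for fixed $z$ one has $\mathcal K_a(x_\delta,z)\to\mathcal K_a(0,z)=0$ as $x_\delta\to0$, so any bound using the time-independent floor $\rho(\cdot,t)\ge\rho_0$ produces merely $-u(x_\delta)\gtrsim x_\delta$, i.e.\ only exponential, infinite-time decay. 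The finite-time rate must instead come from the diagonal Hilbert-type singularity $\widetilde K_a(y)\sim 1/(\pi y)$: restricting the identity above to $z$ just above $x_\delta$ gives $-u(x_\delta)\gtrsim \tfrac{g}{\pi}\int_{x_\delta}^{x_\delta+a}\frac{\rho(z,t)-\delta}{z-x_\delta}\,dz$, which couples back to the evolving local steepness of $\rho$ near $x_\delta$. I would close this by a bootstrap controlling the profile uniformly in time—e.g.\ tracking a second level set $x_{2\delta}(t)$ and showing $x_{2\delta}/x_\delta$ stays bounded, so that $\rho$ climbs from $\delta$ to $2\delta$ across an interval comparable to $x_\delta$ and the displayed integral is $\gtrsim g\delta\log(a/x_\delta)$.

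I expect this feedback step to be the main obstacle: the finite-time mechanism is genuinely nonlinear and self-reinforcing, so one cannot sidestep estimating the moving local profile at the collapsing level set, and ruling out that the steep region spreads faster than it translates (the bound on $x_{2\delta}/x_\delta$) is the delicate point; the principal-value and periodization bookkeeping for $\widetilde K_a$ needed to make the sign identity and the near-diagonal bound rigorous is a secondary technical burden. An alternative that packages the same feedback more algebraically is to track the weighted functional $I(t)=\int_0^\pi \rho(x,t)/x\,dx$ and prove a Riccati inequality $I'(t)\ge c\,I(t)^2$ from the positive-definite structure of the trilinear form $\int_0^\pi (H_a\rho)\,\partial_x\rho/x\,dx$, as in the CCF analysis; there the difficulty migrates to showing that the extra factor $a^2/(y^2+a^2)$ in $K_a$ does not destroy the positivity available for the pure Hilbert transform.
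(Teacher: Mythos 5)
Your preparatory observations are correct and coincide with the paper's \Cref{lem_FTBU_monotonicty}: the flow fixes $x=0$ and $x=\pm\pi$, preserves evenness, nonnegativity, $\rho(0,t)=0$, and the monotonicity of $\rho$ on $[0,\pi)$, and the folded representation of $u$ in terms of the periodized kernel is the right starting point. (Even there, a caveat: your ``pointwise identity'' with absolute values presupposes that the periodized kernel $\widetilde K_a$ is positive and decreasing on $(0,\pi)$; this is not free, because the tail corrections $K_a(y+2\pi k)-K_a(2\pi k-y)$ are all \emph{negative}, and indeed $\widetilde K_a(\pi)=0$. The paper's \Cref{lem_FTBU_HaEstimate} handles exactly this bookkeeping, and only obtains a usable one-sided bound for $0<x\le\pi/2$.) The genuine gap is at the core of your argument: the finite-time collapse of the level set $x_\delta(t)$ is never established. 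You correctly diagnose that the time-independent floor $\rho(\cdot,t)\ge\rho_0$ yields only $-u(x_\delta)\gtrsim x_\delta$, i.e.\ exponential, infinite-time decay — so no blow-up follows from what is actually proved. The proposed repair (a bootstrap keeping $x_{2\delta}(t)/x_\delta(t)$ bounded so that $-u(x_\delta)\gtrsim g\delta\log(a/x_\delta)$) is precisely the self-reinforcing nonlinear estimate that \emph{is} the theorem, and no mechanism for closing it is offered; you acknowledge this yourself. As written, the proposal proves monotone inward drift, not singularity formation. (Also, the continuation criterion is not optional in your scheme: if smoothness were lost before the putative collapse time, you would still need \Cref{prop_FTBU_BKM} to conclude that $\int_0^t\nm{\partial_x\rho(\cdot,s)}_\infty\,ds$ diverges at the actual blow-up time.)

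The alternative you sketch in your final sentences is, in fact, the paper's proof, and the difficulty you flag there is exactly the content of its key proposition. The paper tracks $J(t)=\int_0^{\pi/2}\rho(x,t)\,x^{-1-\delta}\,dx$ with $0<\delta<1$, and derives the Riccati inequality $J'\ge c\,J^2$ from \Cref{lem_FTBU_monotonicty}, H\"older, and the positivity statement (\Cref{prop_FTBU_HaInequality})
\[
-\int_0^{\pi/2}\frac{H_af(x)\,f'(x)}{x^{\sigma}}\,dx\ \ge\ C_{a,\sigma}\int_0^{\pi/2}\frac{f(x)^2}{x^{1+\sigma}}\,dx,
\]
after which \Cref{prop_FTBU_BKM} converts the contradiction into the stated divergence. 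The proof that the factor $a^2/(y^2+a^2)$ does not destroy positivity proceeds by integrating by parts to get $H_af(x)\le\int_0^{2x}f'(y)\,G_a(x,y)\,dy$ with an explicit nonpositive kernel $G_a$, a monotonicity analysis showing $G_a(x,y)\le G_a(x,qx)<0$ for $y\in[x/q,qx]$ and any fixed $q\in(1,2)$, and a reduction to the inequality $\int_0^{\pi/2}(f(qx)-f(x/q))f'(x)x^{-\sigma}dx\gtrsim_{q,\sigma}\int_0^{\pi/2}f(x)^2x^{-1-\sigma}dx$ from \cite{K18}. So the viable route is your ``alternative,'' but it requires supplying precisely the kernel estimates you deferred; your primary route has an admitted, unclosed gap at its central step.
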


\begin{remark}
We study (\ref{eq_intro_1DIPM}) on the circle $\T$ as it captures all interesting aspects of the problem while minimizing technical local well-posedness issues. This choice also corresponds to the natural choice of domain $\Omega$ for (\ref{eq_intro_IPM}). Nonetheless, one can adapt the proofs in this note to similarly establish local well-posedness on the real line and show that finite time blow-up can still occur for a family of non-negative even smooth solutions on the real line. See \Cref{rem_FTBU_FiniteEnergy} for more details about the blow-up solutions on the real line.
\end{remark}

\subsection{Notation}

We work with the spatial domain $\T := [-\pi, \pi)$ representing the circle. Any functions defined on $\T$ are considered to be $2\pi$-periodic functions over the whole real line. In particular, we write $C^\infty(\T)$ to denote smooth $2\pi$-periodic functions which must be bounded and whose derivatives must also be bounded. When applying any (singular) integral operator to a function $f$ on $\T$, we mean to apply the operator to the periodization of $f$ to the whole real line.

We write $L^p(\T)$ with norm $\nm{f}_p$ to denote the standard Lebesgue space of $2\pi$-periodic functions, for any $1 \le p \le \infty$. The $L^2(\T)$ inner product is written as $\gen{f, g}$. For any bounded linear operator $S : L^p(\T) \to L^q(\T)$, we denote the operator norm of $S$ by $\nm{S}_{L^p \to L^q}$. For any integer $s > 0$, we write $H^s(\T)$ to mean the standard Sobolev space of $2\pi$-periodic functions that have $s$ many weak derivatives all lying in $L^2(\T)$ and we use the corresponding norm
\[
\nm{f}_{H^s} := \nm{f}_2 + \nm{f}_{\dH^s}, \qquad \nm{f}_{\dH^s} := \nm{(-\lap)^{s/2}f }_2 = \sqrt{\gen{f,\ (-\lap)^s f}}.
\]
All norms of a function $f(x,t)$ depending on space and time will refer to spatial norms.

Lastly, we write $A \lesssim B$ to mean there exists a universal constant $C > 0$ such that $A \le CB$. If the constant $C$ depends on some parameter(s) $s$, we instead write $A \lesssim_{s} B$.

%%%%%%%%%%%%%%%%%%%%%%%%%%%%%%%%%%%%%%%%%%%%%%%%%%%%%%%
%% DERIVATION AND CCF
%%%%%%%%%%%%%%%%%%%%%%%%%%%%%%%%%%%%%%%%%%%%%%%%%%%%%%%
\section{Constructing the model and its similarity to the CCF equation}
\label{sec_const}

\subsection{Model construction}\label{sec_const1}

Following the same procedure as outlined in \cites{HL14a,HL14b,CHKLSY15,DKX16}, we use (\ref{eq_intro_IPM}) to derive the 1D model (\ref{eq_intro_1DIPM}). Indeed, (\ref{eq_intro_IPM}) gives the following transport equation along the boundary $\partial \Omega = \T \times \{0\}$
\begin{equation}\label{eq_const_1DIPMder}
\partial_t \rho + u_1 \partial_1 \rho = 0.
\end{equation}
In turn, we wish to find an approximate formula of $u_1$ on the boundary that depends only on the value of $\rho$ on the boundary. To do so, recall that the Biot-Savart law for (\ref{eq_intro_IPM}) is given by
\[
u = g \grad^\perp (-\lap_D)^{-1} \partial_1 \rho,
\]
where $\grad^\perp := (-\partial_2, \partial_1)$ and $(-\lap_D)^{-1}$ is the inverse of the Dirichlet Laplacian on the periodic half-plane $\Omega$, with the corresponding Green's function given by
\[
G(x,y) := -\frac{1}{2\pi}\paren*{\log |y-x| - \log |y-\bar{x}|}, \quad \bar{x} := (x_1, -x_2).
\]
Note that this Biot-Savart law can be quickly obtained from (\ref{eq_intro_IPM}) by taking the scalar curl of both sides of the equation relating $u$ to $p$ and $\rho$. Now, making the boundary layer assumption $\rho(x, t) = \rho(x_1, 0, t) \chi_{[0, a]}(x_2)$ where $a > 0$ is some fixed constant, the Biot-Savart law gives
\begin{align*}
u_1(x, t) &= -g \partial_{x_2} \int_\R \int_0^\infty G(x, y) \partial_{y_1} \rho(y, t)\, dy_2\, dy_1 \\
&= -g \partial_{x_2} \int_\R \partial_{y_1} \rho(y_1, 0, t) \int_0^a G(x, y) dy_2\, dy_1 \\
&= \frac{g}{2\pi} P.V. \int_\R \partial_{y_1} \rho(y_1, 0, t) \log\paren*{ \sqrt{\frac{(y_1-x_1)^2+x_2^2}{(y_1-x_1)^2+(a-x_2)^2}}\sqrt{\frac{(y_1-x_1)^2+x_2^2}{(y_1-x_1)^2+(a+x_2)^2}} }\, dy_1.
\end{align*}
It follows that
\begin{align*}\label{eq_const_u1}
u_1(x_1, 0, t) &= \frac{g}{\pi} P.V. \int_\R \partial_{y_1} \rho(y_1, 0, t) \log\paren*{ \frac{|y_1-x_1|}{\sqrt{(y_1-x_1)^2+a^2}}}\, dy_1 \\
&= g P.V. \int_\R \rho(y_1, 0, t) \frac{1}{\pi(x_1-y_1)} \frac{a^2}{(x_1-y_1)^2+a^2}\, dy_1. \numberthis
\end{align*}
Collecting (\ref{eq_const_1DIPMder}) and (\ref{eq_const_u1}) motivates the model (\ref{eq_intro_1DIPM}) and the definition of the operator $H_a$ in (\ref{eq_intro_KaKernel}).

\begin{remark}\label{rem_const_achoice}
The motivation for deriving (\ref{eq_intro_1DIPM}), as described in \Cref{sec_intro1}, is to model the evolution of the boundary trace of a solution to (\ref{eq_intro_IPM}). Since blow-up for (\ref{eq_intro_IPM}) on the domain $\Omega$ is suspected to occur similarly as in the Hou-Luo scenario, one should (at least initially) consider (\ref{eq_intro_1DIPM}) to be a ``good'' model only for values of $a > 0$ much smaller than $1$. Of course, we note that the boundary-layer assumption $\rho(x, t) = \rho(x_1, 0, t) \chi_{[0, a]}(x_2)$ taken above will not exactly solve IPM for any choice of $a > 0$. However, in principle, any choice of $a > 0$ works for the proofs of local well-possedness (\Cref{thm_intro_LWP}) in \Cref{sec_LWP} and finite time blow-up (\Cref{thm_intro_FTBU}) in \Cref{sec_FTBU}. See \Cref{sec_const3} for how this leads to a new interpretation of the CCF equation.
\end{remark}

\begin{remark}
We emphasize that the above derivation of (\ref{eq_intro_1DIPM}) relies on the choice of domain $\Omega := \T \times (0, \infty)$ for the 2D IPM equation (\ref{eq_intro_IPM}). This choice dictates the Green's function $G(x,y)$ that eventually leads to the kernel $K_a$ defining the operator $H_a$ in (\ref{eq_intro_KaKernel}). One may instead consider a different domain for (\ref{eq_intro_IPM}), such as the interesting choice of the finite periodic channel $\T \times (0, 1)$. The Green's function, let us denote $\Gamma(x,y)$, for the inverse of the Dirichlet Laplacian on $\T \times (0, 1)$ is more complicated than that of $\Omega$ due to the extra boundary. Yet, $\Gamma(x,y)$ approximates $G(x,y)$ when $x_2, y_2 > 0$ are much smaller than $1$. Thus, we suspect that one may derive a similar 1D model with respect to the domain $\T \times (0, 1)$ when choosing $a > 0$ sufficiently small. This may lead to local well-posedness and finite time singularity formation results similar to those of \Cref{thm_intro_LWP} and \Cref{thm_intro_FTBU}, but now with a possible requirement on the smallness of $a$.
\end{remark}

\subsection{Properties of \texorpdfstring{$H_a$}{Ha}} \label{sec_const2}

The Biot-Savart law for (\ref{eq_intro_1DIPM}) is given by the convolution-type singular integral operator $H_a$ with kernel $K_a$ as defined in (\ref{eq_intro_KaKernel}). One can think of $H_a$ as an operator that ``interpolates'' between the trivial zero operator and the prototypical 1D singular integral operator: the Hilbert transform $H$ with kernel $1/(\pi y)$. This can be seen in three ways. First, the kernel $K_a$ converges pointwise to the Hilbert kernel as $a \to \infty$ while it instead converges pointwise to zero when taking $a \to 0$.

Second, $H$ and $H_a$ of course satisfy the assumptions of Calder\'{o}n-Zygmund theory, with both being bounded linear operators on $L^p$ for any $1 < p < \infty$. Furthermore, a short computation gives that $H - H_a$ is a smoothing (and not a principal value) convolution operator with the kernel $y/(\pi(y^2+a^2))$ whenever $H-H_a$ is applied to any $f \in L^p$ with $1 < p < \infty$. Young's convolution inequality then gives that $H-H_a$ is a bounded linear operator from $L^p$ to $C^\infty \cap L^q$ for any $1 < p < q < \infty$ with
\[
\nm{H - H_a}_{L^p \to L^q} \le \nm*{\frac{y}{\pi(y^2+a^2)}}_{1+1/p-1/q}.
\]
As $C^\infty \cap L^q$ is compactly embedded into $L^q$, we have that $H_a$ is equal to a compact perturbation of $H$ in the sense of the $L^p$ to $L^q$ operator norm, with this perturbation going to zero in norm as $a \to \infty$. We also point out that $H-H_a$ being a smoothing operator is particularly useful since it allows one to quickly observe that $[H-H_a] \circ \partial_x$ is smoothing too. Using this fact along with the 1D Morrey-Sobolev embedding inequality, one can show
\begin{equation}\label{eq_const_HsToLinfty}
\nm{H_a[\partial_x f]}_\infty \le \nm{(-\lap)^{1/2}f}_\infty + \nm{(H-H_a)[\partial_x f]}_\infty \lesssim_{a,s} \nm{f}_{H^s}
\end{equation}
for any $f \in H^s(\T) \subseteq L^\infty(\T)$ and any $s \ge 2$. Note that although $H_a$ is a singular integral operator that is not necessarily bounded on $L^\infty$, the extra $H^s$ regularity allows for (\ref{eq_const_HsToLinfty}) to hold true.

Lastly, being a convolution operator, $H_a$ is also a Fourier multiplier operator on the real line with the symbol $\widehat{K}_a(\xi) = -i \sgn(\xi)\paren{1-e^{-2\pi a|\xi|}}$. This symbol converges pointwise to the symbol $-i\sgn(\xi)$ for the Hilbert transform as $a \to \infty$, while it instead converges pointwise to zero as $a \to 0$. Therefore, by the dominated convergence theorem, it holds that
\[
\nm{(H-H_a)f}_2 = \nm{e^{-2\pi a|\xi|}\widehat{f}}_2 \to 0 \quad \text{ as } a \to \infty
\]
and
\[
\nm{H_af}_2 = \nm{(1-e^{-2\pi a|\xi|})\widehat{f}}_2 \to 0 \quad \text{ as } a \to 0.
\]
In words, we have that $H_a$ converges to $H$ as $a \to \infty$ while instead converging to zero as $a \to 0$, both with respect to the $L^2$ strong operator topology.

Altogether, $H_a$ can be viewed as an interpolation between the zero operator and Hilbert transform. In \Cref{sec_LWP}, we use the $L^p$ boundedness of $H_a$ for $1 < p < \infty$ and also inequality (\ref{eq_const_HsToLinfty}).

\subsection{Similarity to CCF} \label{sec_const3}

The well-known Birkhoff-Rott equations model how vortex sheets evolve under surface tension. In \cites{BLM95}, the following 1D equation posed on the real line was proposed as a model for Birkhoff-Rott:
\begin{equation}\label{eq_const_CCF}
\partial_t \theta - u \partial_x \theta + \nu(-\lap)^{\alpha/2}\theta = 0, \quad u = H\theta, \quad \theta(\cdot, 0) = \theta_0,
\end{equation}
where $\theta$ is a 1D analogue of the effective vortex sheet strength, $\nu \ge 0$ is a constant representing the strength of viscosity, and $0 \le \alpha \le 2$ is a parameter affecting how (hypo-)dissipative the model is. In \cite{CCF07}, equation (\ref{eq_const_CCF}) was shown via complex-analytic methods to allow finite time singularity formation when $\nu = 0$ for nonnegative even compactly supported sufficiently regular data attaining their maximimum at the origin. The equation (\ref{eq_const_CCF}) became known as the C\'{o}rdoba-C\'{o}rdoba-Fontelos (CCF) model, and was further studied by \cites{BLM95,M98,LR08,CCF07,D08,SV16,HR16} among other works. In particular, \cite{LR08} generalized the blow-up argument of \cite{CCF07} to the case of $\nu > 0$ and $0 < \alpha < 1/2$ while \cite{K18} proved the same result via a maximal principle approach. These two works used the same family of initial data for blow-up as \cite{CCF07}, with \cite{LR08} requiring an additional technical assumption and \cite{K18} removing the need for compact support.

Now, equation (\ref{eq_const_CCF}) with $\nu = 0$ preserves in time the spatial maximum of the initial blow-up data $\theta_0$, which is attained at $x = 0$ \cites{CCF07,K18}. Analogously, we show in \Cref{lem_LWP_AprioriLinfty} below that (\ref{eq_intro_1DIPM}) also preserves in time the maximum of the initial blow-up data $\rho_0$ which is attained at $x = \pm \pi$, under the assumptions of \Cref{thm_intro_FTBU}. These observations imply that $\theta_0(0) - \theta(x, t)$ is a function of $x$ for each fixed time $t$ that falls into our class of blow-up data for (\ref{eq_intro_1DIPM}). Correspondingly, $\rho_0(\pi) - \rho(x,t)$ gives us a blow-up data that solves (\ref{eq_const_CCF}) with $\nu = 0$ and with the Hilbert transform $H$ replaced by the operator $gH_a$. Recalling that the well-posedness of the model (\ref{eq_intro_1DIPM}) does not depend on the choice of $a > 0$ (see \Cref{rem_const_achoice}), we are encouraged by the discussion in \Cref{sec_const2} to formally take $a \to \infty$. Doing so, we have formally gone from a blow-up solution $\rho$ of (\ref{eq_intro_1DIPM}) and obtained a corresponding blow-up solution of CCF. As a result, one may argue that CCF with $\nu = 0$ actually serves as a reasonable 1D model for the trace of a boundary layer solution to the IPM equation (\ref{eq_intro_IPM}) (perhaps upon changing the constant in front of $u$ in (\ref{eq_const_CCF}) from $-1$ to $g$). Further supporting this argument, we note that IPM preserves the symmetry of an initial data that is even in $x_1$ and odd in $x_2$ while CCF with $\nu = 0$ (see \cite{K18}) and equation (\ref{eq_intro_1DIPM}) (see \Cref{lem_FTBU_monotonicty}) both also preserve the symmetry of initial data that is even on $\T \times \{0\} = \partial \Omega$.

Another well-known active scalar equation is the 2D surface quasi-geostrophic (SQG) equation \cite{CMT94}, which has a Biot-Savart law given by $(R_2, -R_1)$ with $R_j := \partial_j (-\lap)^{-1/2}$ for $j = 1,2$ denoting the standard Riesz transforms. The Riesz transforms are zeroth order singular integral operators and so too is the Hilbert transform $H$. So, one may suggest that (\ref{eq_const_CCF}) with $\nu = 0$ serves as a good 1D model for SQG. However, we note that SQG preserves the symmetry of initial data that is odd in $x_1$ and odd in $x_2$, with this behavior being the opposite of the symmetry preservation in CCF (\ref{eq_const_CCF}), as discussed above. Yet, taking the spatial derivative of (\ref{eq_const_CCF}) when $\nu = 0$ leads to the Okamoto-Sakajo-Wunsch equation (OSW) \cite{OSW08} with transport-strength parameter set equal to $-1$. This OSW equation does preserve the odd symmetry of initial data just like SQG. In turn, it may be better to instead interpret OSW (with various choices of transport-strength parameter) as a good 1D model for certain classes of solutions to SQG. Indeed, the work of \cite{CC10} shows that SQG solutions satisfying the ``stagnation point similitude ansatz'' correspond to solutions of the De Gregorio equation, which is the OSW model with parameter equal to $1$. Similarly, local-in-time solutions to SQG satisfying the ``radial homogeneity ansatz'' were shown in \cite{EJ10} to correspond to solutions of a slight variation of the OSW equation with parameter set to $2$. The work \cite{EJ17} thoroughly discusses connections between different parameter choices for the OSW equation and establishes finite time singularity formation in solutions with respect to a range of such choices.

Altogether, we emphasize that CCF may be interpreted as a reasonable model for the boundary evolution of solutions to IPM on the periodic half-space $\Omega$.

%%%%%%%%%%%%%%%%%%%%%%%%%%%%%%%%%%%%%%%%%%%%%%%%%%%%%%%
%% LOCAL WELL-POSEDNESS
%%%%%%%%%%%%%%%%%%%%%%%%%%%%%%%%%%%%%%%%%%%%%%%%%%%%%%%
\section{Local well-posedness}\label{sec_LWP}

Here, we present the lemmas required to prove \Cref{thm_intro_LWP} on local well-posedness. We first prove the uniqueness of solutions to (\ref{eq_intro_1DIPM}).

\begin{lemma}\label{lem_LWP_AprioriUnique}
Fix $a, g > 0$ and suppose $\rho_1, \rho_2$ are solutions in $C^\infty(\T)$ to (\ref{eq_intro_1DIPM}) on a shared time interval $[0, T)$ with respect to the same initial data $\rho_0 \in C^\infty(\T)$. Then $\rho_1 = \rho_2$ on $[0, T)$.
\end{lemma}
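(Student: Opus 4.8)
The plan is to prove uniqueness via a standard energy estimate on the difference $w := \rho_1 - \rho_2$, which satisfies $w(\cdot,0) = 0$. Subtracting the two equations gives $\partial_t w + u_1 \partial_x \rho_1 - u_2 \partial_x \rho_2 = 0$ where $u_i = gH_a\rho_i$. I would rewrite the transport terms in the telescoping form
\[
u_1 \partial_x \rho_1 - u_2 \partial_x \rho_2 = u_1 \partial_x w + (u_1 - u_2)\partial_x \rho_2 = gH_a\rho_1 \, \partial_x w + g(H_a w)\,\partial_x\rho_2,
\]
using the linearity of $H_a$. The goal is then to control $\tfrac{d}{dt}\nm{w}_2^2$ and close a Gr\"{o}nwall-type inequality forcing $w \equiv 0$.

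First I would test the difference equation against $w$ in $L^2(\T)$, obtaining
\[
\frac{1}{2}\frac{d}{dt}\nm{w}_2^2 = -g\gen{H_a\rho_1\,\partial_x w,\ w} - g\gen{(H_aw)\,\partial_x \rho_2,\ w}.
\]
For the first term I would integrate by parts, using $\partial_x w \cdot w = \tfrac12\partial_x(w^2)$, to move the derivative onto the velocity coefficient:
\[
-g\gen{H_a\rho_1\,\partial_x w,\ w} = \frac{g}{2}\gen{\partial_x(H_a\rho_1),\ w^2} = \frac{g}{2}\gen{H_a[\partial_x\rho_1],\ w^2},
\]
which is bounded by $\tfrac{g}{2}\nm{H_a[\partial_x\rho_1]}_\infty \nm{w}_2^2$. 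Since $\rho_1 \in C^\infty(\T) \subseteq H^s(\T)$ for every $s$, inequality (\ref{eq_const_HsToLinfty}) gives $\nm{H_a[\partial_x\rho_1]}_\infty \lesssim_{a,s}\nm{\rho_1}_{H^s}$, a finite quantity that is continuous (hence locally bounded) on $[0,T)$. For the second term I would use H\"{o}lder and the $L^2$-boundedness of $H_a$ from \Cref{sec_const2}: $|\gen{(H_aw)\partial_x\rho_2, w}| \le \nm{\partial_x\rho_2}_\infty \nm{H_aw}_2 \nm{w}_2 \le C\,\nm{\partial_x\rho_2}_\infty\nm{w}_2^2$, where $C$ is the operator norm $\nm{H_a}_{L^2\to L^2}$ and $\nm{\partial_x\rho_2}_\infty$ is again finite and locally bounded in time.

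Combining these estimates yields $\tfrac{d}{dt}\nm{w}_2^2 \le C(t)\nm{w}_2^2$ on any compact subinterval $[0,T'] \subset [0,T)$, where $C(t)$ depends on the (locally bounded, time-continuous) quantities $\nm{\rho_1(\cdot,t)}_{H^s}$ and $\nm{\partial_x\rho_2(\cdot,t)}_\infty$. Gr\"{o}nwall's inequality together with $\nm{w(\cdot,0)}_2 = 0$ then forces $\nm{w(\cdot,t)}_2 = 0$ for all $t \in [0,T']$, and since $T'$ was arbitrary we conclude $\rho_1 = \rho_2$ on all of $[0,T)$. The main obstacle I anticipate is the first transport term, which loses a derivative: pairing it naively against $w$ leaves a factor $\partial_x w$ that cannot be controlled by $\nm{w}_2$ alone. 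The integration-by-parts trick resolves this by transferring the derivative onto the smooth coefficient $\rho_1$ rather than the rough difference $w$, at which point the regularity estimate (\ref{eq_const_HsToLinfty}) — precisely the inequality the authors flagged as useful for exactly this purpose — supplies the needed $L^\infty$ bound without costing any regularity on $w$.
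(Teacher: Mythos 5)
Your proposal is correct and follows essentially the same argument as the paper: the same telescoping decomposition of the nonlinearity, the same integration by parts transferring the derivative onto $H_a\rho_1$ (bounded via (\ref{eq_const_HsToLinfty})), the same use of the $L^2$-boundedness of $H_a$ for the second term, and Gr\"{o}nwall to conclude. The only (harmless) difference is that you spell out the local-in-time boundedness of the Gr\"{o}nwall coefficient on compact subintervals, which the paper leaves implicit.
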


\begin{proof}
All of the following computations and estimates hold on the time interval $[0, T)$. Defining $\rho := \rho_1 - \rho_2$, we have that
\[
\frac{1}{g}\partial_t \rho = -\partial_x \rho_1 H_a \rho_1 + \partial_x \rho_2 H_a \rho_2 = -\partial_x \rho H_a \rho_1 - \partial_x \rho_2 H_a \rho.
\]
It follows that
\[
\frac{1}{2g}\frac{d}{dt} \nm{\rho}_2^2 = - \gen{\partial_x \rho H_a \rho_1,\ \rho} - \gen{\partial_x \rho_2 H_a \rho,\  \rho}.
\]
We observe that
\[
-\gen{\partial_x \rho H_a \rho_1,\ \rho} = \frac{1}{2}\gen{H_a(\partial_x \rho_1),\ \rho^2} \le \nm{H_a[\partial_x \rho_1]}_\infty \nm{\rho}_2^2 \lesssim_{a,s} \nm{\rho_1}_{H^s}\nm{\rho}_2^2,
\]
where the final inequality holds for any integer $s \ge 2$ by (\ref{eq_const_HsToLinfty}), and
\[
|\gen{\partial_x \rho_2 H_a \rho,\  \rho}| \le \nm{H_a}_{L^2 \to L^2} \nm{\partial_x \rho_2}_\infty \nm{\rho}_2^2.
\]
Altogether, it follows
\[
\frac{1}{g}\frac{d}{dt} \nm{\rho}_2^2 \lesssim_{a,s} \paren*{\nm{H_a}_{L^2 \to L^2} \nm{\partial_x \rho_2}_\infty + \nm{\rho_1}_{H^s}} \nm{\rho}_2^2.
\]
Gr\"{o}nwall's inequality completes the proof since $\rho(x, 0) = 0$ for all $x \in \T$.
\end{proof}

We next establish a-priori estimates on the growth of the $L^\infty$ and $L^2$ norms of a solution.

\begin{lemma}\label{lem_LWP_AprioriLinfty}
Fix $a, g > 0$ and suppose for some integer $s \ge 4$ that $\rho$ is a solution in $H^s(\T)$ to (\ref{eq_intro_1DIPM}) on the time interval $[0, T)$. Then, given $\rho_0 \in H^s(\T) \subseteq L^\infty(\T)$, we have $\nm{\rho}_\infty = \nm{\rho_0}_\infty$ on $[0, T)$.
\end{lemma}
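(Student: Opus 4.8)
The plan is to exploit that \eqref{eq_intro_1DIPM} is a pure transport equation $\partial_t\rho=-u\,\partial_x\rho$, so the solution merely rearranges the values of $\rho_0$ along the flow of $u$; the set of attained values, and in particular the supremum of $|\rho|$, should therefore be preserved. Rather than constructing the characteristic flow directly, I would make this rigorous through an $L^p$ energy estimate and then pass to the limit $p\to\infty$. The one ingredient needed beforehand is control on $\partial_x u$: since $u=gH_a\rho$ we have $\partial_x u=gH_a[\partial_x\rho]$, and \eqref{eq_const_HsToLinfty} (applicable since $s\ge 4\ge 2$) gives $\nm{\partial_x u(\cdot,t)}_\infty\lesssim_{a,s} g\,\nm{\rho(\cdot,t)}_{H^s}$. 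Because $\rho$ is an $H^s$ solution on $[0,T)$, the map $t\mapsto\nm{\rho(\cdot,t)}_{H^s}$ is finite and locally bounded, so $M(t):=\int_0^t\nm{\partial_x u(\cdot,s)}_\infty\,ds<\infty$ for every $t<T$, and crucially $M(t)$ does not depend on $p$.

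Second, I would fix an even integer $p=2k$ so that $|\rho|^p=\rho^p$ is smooth, differentiate in time, and integrate by parts using periodicity:
\[
\frac{d}{dt}\nm{\rho}_p^p=\int_\T p\,\rho^{p-1}\partial_t\rho\,dx=-\int_\T u\,\partial_x(\rho^p)\,dx=\int_\T(\partial_x u)\,\rho^p\,dx,
\]
where the boundary-free integration by parts is justified by the smoothness $H^s(\T)\hookrightarrow C^3(\T)$ for $s\ge 4$. Bounding the right-hand side above and below by $\pm\nm{\partial_x u}_\infty\nm{\rho}_p^p$ and applying Gr\"onwall's inequality in both directions yields
\[
\nm{\rho_0}_p\,e^{-M(t)/p}\le\nm{\rho(\cdot,t)}_p\le\nm{\rho_0}_p\,e^{M(t)/p},\qquad 0\le t<T.
\]

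Third, I would let $p\to\infty$ through even integers. On the finite-measure space $\T$ one has $\nm{f}_p\to\nm{f}_\infty$, while $e^{\pm M(t)/p}\to 1$ since $M(t)$ is fixed and finite. Hence both outer bounds converge to $\nm{\rho_0}_\infty$, forcing $\nm{\rho(\cdot,t)}_\infty=\nm{\rho_0}_\infty$ for all $t<T$ (the case $\rho_0\equiv 0$ being trivial).

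The main obstacle is precisely that the individual $L^p$ norms are \emph{not} conserved: the term $\int_\T(\partial_x u)\rho^p\,dx$ does not vanish, since $u$ is not divergence-free in any sense in one dimension. The argument therefore hinges on the factor $1/p$ in the exponent absorbing the $p$-independent quantity $M(t)$ in the limit, so the only delicate point is verifying that $M(t)$ is finite and uniform in $p$ on each compact time subinterval; this is exactly where \eqref{eq_const_HsToLinfty} and the standing $H^s$ regularity with $s\ge 4$ enter, while everything else is routine. Equivalently, one could run a characteristic-flow argument, noting that $u$ is Lipschitz in $x$ and continuous in $t$, so its flow $\Phi_t$ is a bi-Lipschitz bijection of $\T$ and $\rho(\cdot,t)=\rho_0\circ\Phi_t^{-1}$ has the same range as $\rho_0$; the $L^p$ route has the advantage of avoiding the construction of $\Phi_t$.
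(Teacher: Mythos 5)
Your argument is correct, but it is not the paper's route: the paper proves this lemma precisely via the characteristic-flow construction that you relegate to a closing remark. There, the $L^2$ boundedness of $H_a$ plus Morrey--Sobolev embedding give that $u=gH_a\rho$ is $C^3$ in space (this is exactly where the hypothesis $s\ge 4$ enters), the flow map $\Phi_t$ is then built by classical ODE theory, and $\rho(\cdot,t)=\rho_0\circ\Phi_t^{-1}$ immediately yields the norm identity. Your $L^p$-energy-plus-Gr\"onwall scheme with $p\to\infty$ is a legitimate alternative, and it is more economical with regularity: it only needs $\nm{\partial_x u}_\infty$ to be locally integrable in time, which (\ref{eq_const_HsToLinfty}) already supplies for $s\ge 2$, so the assumption $s\ge 4$ is never really used by your proof. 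What the paper's flow map buys in exchange is structural information beyond the norm identity --- the solution is an exact rearrangement of $\rho_0$, hence preserves its range and sign --- and this same construction is recycled in \Cref{lem_FTBU_monotonicty} to propagate nonnegativity, the fixed point at the origin, and $\partial_x\rho\ge 0$ on $[0,\pi)$; your computation yields only the sup-norm conservation, which is all the lemma claims. Two details you should make explicit to close the argument: the interchange $\frac{d}{dt}\int_\T\rho^p\,dx=\int_\T p\,\rho^{p-1}\partial_t\rho\,dx$ uses continuity of $\partial_t\rho=-u\,\partial_x\rho$ in space-time (available from the standing $H^s$ regularity), and the finiteness of $M(t)$ requires $t\mapsto\nm{\rho(\cdot,t)}_{H^s}$ to be locally integrable on $[0,T)$, which should be read off from the definition of an $H^s$ solution rather than merely asserted.
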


\begin{proof}
We have by the $L^2$ boundedness of $H_a$ (and as spatial differentiation commutes with $H_a$) that $H_a \rho \in H^4(\T)$. By the 1D Morrey-Sobolev embedding, it follows that $H_a \rho$ is thrice continuously differentiable in space and, due to (\ref{eq_intro_1DIPM}), twice continuously differentiable in time. Since (\ref{eq_intro_1DIPM}) is a transport equation with advection velocity $u = g H_a \rho$ having sufficient regularity, one can construct an invertible flow map $\Phi_t(x)$ for each fixed $x \in \T$ defined for all time $t \in [0, T)$. It holds then that $\rho(x, t) = \rho_0(\Phi_t^{-1}(x))$ for all $x \in \T$ and $t \in [0, T)$, thus completing the proof.
\end{proof}

\begin{lemma}\label{lem_LWP_AprioriL2}
Fix $a, g > 0$ and suppose $\rho$ is a solution in $H^s(\T)$ to (\ref{eq_intro_1DIPM}) on the time interval $[0, T)$ for some integer $s \ge 2$. It then holds that
\[
\frac{1}{g}\frac{d}{dt} \nm{\rho}_2^2 \lesssim_{a,s} \nm{\rho}_{H^s} \nm{\rho}_2^2 = \nm{\rho}_{\dH^s} \nm{\rho}_2^2 + \nm{\rho}_2^3
\]
on $[0, T)$.
\end{lemma}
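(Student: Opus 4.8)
The plan is to run a standard $L^2$ energy estimate, reusing exactly the manipulation already carried out for the first term in the uniqueness argument of \Cref{lem_LWP_AprioriUnique}. Writing the equation as $\partial_t \rho = -g (H_a \rho)\, \partial_x \rho$ and pairing with $\rho$ in $L^2(\T)$, I would first obtain
\[
\frac{1}{2g}\frac{d}{dt}\nm{\rho}_2^2 = \gen{\tfrac{1}{g}\partial_t\rho,\ \rho} = -\gen{(H_a\rho)\,\partial_x\rho,\ \rho}.
\]
The only algebraic trick is to recognize $\rho\,\partial_x\rho = \tfrac12\partial_x(\rho^2)$, integrate by parts (there are no boundary terms since everything is $2\pi$-periodic), and then move the derivative onto the velocity, using that $H_a$ commutes with $\partial_x$ because it is a convolution (equivalently, Fourier multiplier) operator. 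This yields
\[
\frac{1}{2g}\frac{d}{dt}\nm{\rho}_2^2 = \tfrac12\gen{H_a[\partial_x\rho],\ \rho^2}.
\]

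From here the estimate is immediate: I would apply Hölder to bound $\gen{H_a[\partial_x\rho],\ \rho^2} \le \nm{H_a[\partial_x\rho]}_\infty\,\nm{\rho^2}_1 = \nm{H_a[\partial_x\rho]}_\infty\,\nm{\rho}_2^2$, and then invoke the key inequality \eqref{eq_const_HsToLinfty}, which for any integer $s\ge 2$ gives $\nm{H_a[\partial_x\rho]}_\infty \lesssim_{a,s}\nm{\rho}_{H^s}$. Combining these produces
\[
\frac{1}{g}\frac{d}{dt}\nm{\rho}_2^2 \lesssim_{a,s} \nm{\rho}_{H^s}\,\nm{\rho}_2^2,
\]
and the stated equality on the right-hand side is simply the definition $\nm{\rho}_{H^s}=\nm{\rho}_2+\nm{\rho}_{\dH^s}$ multiplied through by $\nm{\rho}_2^2$.

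I do not expect a genuine obstacle here; the estimate is a clean consequence of the divergence structure of the nonlinearity. The one point requiring care — and the reason the hypothesis $s\ge 2$ appears — is that $H_a$ is a singular integral operator and is \emph{not} bounded on $L^\infty$, so one cannot naively control $\nm{H_a[\partial_x\rho]}_\infty$ by $\nm{\partial_x\rho}_\infty$. The gain comes entirely from the smoothing structure exploited in \eqref{eq_const_HsToLinfty}, which trades the $L^\infty$ bound on $H_a[\partial_x\rho]$ for $H^s$ control of $\rho$ via the Morrey--Sobolev embedding. The secondary technical matter is justifying the integration by parts and the commutation of $\partial_x$ through $H_a$, but the assumed $H^s$ regularity with $s\ge 2$ (which, by Sobolev embedding, places $\rho$ and its first derivative in $L^\infty$) is more than enough to make every term above well-defined and the manipulations rigorous.
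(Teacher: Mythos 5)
Your proposal is correct and follows essentially the same route as the paper's proof: pair the equation with $\rho$, use $\rho\,\partial_x\rho = \tfrac12\partial_x(\rho^2)$ together with periodic integration by parts and the commutation of $H_a$ with $\partial_x$ to arrive at $\gen{H_a[\partial_x\rho],\ \rho^2}$, then apply H\"older and the key inequality (\ref{eq_const_HsToLinfty}). Your closing observations --- that the equality in the statement is just the definition $\nm{\rho}_{H^s}=\nm{\rho}_2+\nm{\rho}_{\dH^s}$, and that the $H^s$ hypothesis is what compensates for $H_a$ not being bounded on $L^\infty$ --- match the paper's reasoning exactly.
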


\begin{proof}
We have by (\ref{eq_const_HsToLinfty}) that
\[
\frac{1}{g}\frac{d}{dt} \nm{\rho}_2^2 = -2\gen{H_a \rho \partial_x \rho,\ \rho} = \gen{H_a[\partial_x \rho],\ \rho^2} \le \nm{H_a[\partial_x \rho]}_{\infty} \nm{\rho}_2^2 \lesssim_{a,s} \nm{\rho}_{H^s} \nm{\rho}_2^2
\]
on the time interval $[0, T)$.
\end{proof}

Lastly, it remains to bound the $\dH^s$ norm of a solution. To do so, we make use of the following 1D Gagliardo-Nirenberg-Sobolev interpolation inequality, which we state for ease of reference.

\begin{lemma}\label{lem_LWP_GNS}
Fix $1 \le p < \infty$ and integers $0 < j < s$. Suppose there exists $\theta \in [j/s, 1]$ such that
\[
\frac{1}{p} = j + \theta\paren*{\frac{1}{2} - s}.
\]
Then, for any $f \in H^s(\T) \subseteq L^\infty(\T)$, we have
\[
\nm{\partial_x^j f}_p \lesssim_{p,j,s} \nm{f}_{H^s}^\theta\nm{f}_\infty^{1-\theta}.
\]
\end{lemma}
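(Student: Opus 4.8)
This is precisely the classical one-dimensional Gagliardo--Nirenberg interpolation inequality, specialized to the case in which the top-order derivative is measured in $L^2$ (so that the homogeneous high norm is $\nm{f}_{\dH^s}$) and the zeroth-order factor is measured in $L^\infty$. The hypothesis $\tfrac1p = j + \theta(\tfrac12 - s)$ is exactly the dimensional-balance (scaling) relation of Gagliardo--Nirenberg, and $\theta \in [j/s,1]$ is the usual admissibility restriction; for integers $0 < j < s$ the values of $p$ for which such a $\theta$ exists are exactly $p \in [2s/j, \infty)$. Because we have put the full norm $\nm{f}_{H^s} \ge \nm{f}_{\dH^s}$ on the right, the statement is in fact weaker than the sharp Euclidean one, and it is this inhomogeneous form that is natural on the compact domain $\T$.

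The plan is to reduce to two endpoint estimates and then interpolate. Setting $t = 1/p$, the admissible exponent $\theta = (j - t)/(s - \tfrac12)$ is affine in $t$, so it suffices to prove the inequality at the two ends $p = \infty$ and $p = 2s/j$ and to fill in $p \in (2s/j, \infty)$ by the log-convexity of the $L^p$ norms: with $\mu = (2s/j)/p \in (0,1)$ one has $\nm{\partial_x^j f}_p \le \nm{\partial_x^j f}_{2s/j}^{\mu}\nm{\partial_x^j f}_\infty^{1-\mu}$, and since $\theta(t)$ is affine the interpolated power of $\nm{f}_{\dH^s}$ is exactly $\theta(1/p)$. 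The $L^\infty$ endpoint (where $\theta = j/(s-\tfrac12)$) is the transparent one: decomposing $f$ into periodic Littlewood--Paley pieces and splitting at a frequency $N$, I would sum the high frequencies with the Bernstein bound $\nm{\partial_x^j \Delta_k f}_\infty \lesssim 2^{k(j+1/2-s)}\nm{f}_{\dH^s}$ and the low frequencies with $\nm{\partial_x^j \Delta_k f}_\infty \lesssim 2^{kj}\nm{f}_\infty$; optimizing over $N \sim (\nm{f}_{\dH^s}/\nm{f}_\infty)^{1/(s-1/2)}$ produces precisely the claimed powers.

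The crux is the scaling-critical endpoint $p = 2s/j$, where $\theta = j/s$. Here the per-frequency contributions in the Littlewood--Paley decomposition are all comparable, so the naive $\ell^1$ summation over dyadic blocks diverges and one must instead sum using the square-function characterization of $\nm{\cdot}_p$ (valid for $1<p<\infty$), or equivalently invoke real interpolation between Sobolev spaces. A self-contained alternative in the spirit of Nirenberg's original proof is to obtain this endpoint by integration by parts and H\"{o}lder; the model computation, for $j=1$, $s=2$, is
\[
\nm{f'}_4^4 = \int_\T (f')^4\,dx = -3\int_\T f\,(f')^2 f''\,dx \le 3\nm{f}_\infty \nm{f'}_4^2 \nm{f''}_2,
\]
so that $\nm{f'}_4 \lesssim \nm{f}_\infty^{1/2}\nm{f}_{\dH^2}^{1/2}$, with the general case following by induction on $(j,s)$. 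The remaining care is bookkeeping on the compact domain: the zero Fourier mode of $f$ is invisible to $\nm{f}_{\dH^s}$, so it is essential that the right-hand side carries the full $\nm{f}_{H^s}$ norm (its $L^2$ part controlling the low modes), and all multiplier and Littlewood--Paley manipulations are performed on the periodization of $f$ to $\R$. Of course, one may also simply cite the Euclidean Gagliardo--Nirenberg inequality and transplant it to $\T$, bypassing the endpoint bookkeeping entirely.
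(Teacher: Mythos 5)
Your identification is exactly right, and it matches the paper's treatment: the paper gives \emph{no proof} of this lemma, stating it ``for ease of reference'' as the classical one-dimensional Gagliardo--Nirenberg inequality with the high norm taken in $L^2$ (hence $\nm{f}_{H^s}$) and the low factor in $L^\infty$. Your reading of $\frac{1}{p} = j + \theta(\frac{1}{2} - s)$ as the scaling balance, of $\theta \in [j/s,1]$ as the admissibility restriction, and of $p \in [2s/j,\infty)$ as the resulting range of exponents is all correct. What your proposal adds beyond the paper is an actual proof outline, and it is sound: the reduction to the endpoints $p = 2s/j$ and $p = \infty$ by log-convexity of $L^p$ norms is valid precisely because $\theta = (j - 1/p)/(s - \frac{1}{2})$ is affine in $1/p$; the $L^\infty$ endpoint by Bernstein bounds and a frequency cut at $2^N \sim (\nm{f}_{\dH^s}/\nm{f}_\infty)^{1/(s-1/2)}$ is correct (on $\T$ one should note that if $\nm{f}_{\dH^s} \le \nm{f}_\infty$ the optimal $N$ is negative, in which case one takes $N=0$ and the desired bound is immediate); and the model computation $\nm{f'}_4^4 \le 3\nm{f}_\infty \nm{f'}_4^2\nm{f''}_2$ is right, with periodicity killing the boundary term. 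The only step left schematic is the induction on $(j,s)$ at the critical endpoint, which is the standard Nirenberg argument (or can be replaced by the square-function/real-interpolation route you mention). In short, the paper buys brevity by citing a classical fact, while your sketch, once that induction is written out, buys self-containedness; the two are entirely consistent.
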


\begin{lemma}\label{lem_LWP_AprioriHs}
Fix $a, g > 0$ and suppose $\rho$ is a solution in $H^s(\T) \subseteq L^\infty(\T)$ to (\ref{eq_intro_1DIPM}) on the time interval $[0, T)$ for some integer $s \ge 2$. It then holds that
\begin{align*}
\frac{1}{g}\frac{d}{dt} \nm{\rho}_{\dH^s}^2 &\lesssim_{a,s} \nm{\rho}_{H^s}\nm{\rho}_{\dH^s}^2 + \nm{\rho}_{\dH^s} \nm{\rho}_{H^s}^{1 + \frac{2}{2s-1}}\nm{\rho}_{\infty}^{1-\frac{2}{2s-1}} \\
&\le \paren*{\nm{\rho}_{\dH^s} + \nm{\rho}_\infty}^3 + \nm{\rho}_{\dH^s}^{2 + \frac{2}{2s-1}}\nm{\rho}_\infty^{1-\frac{2}{2s-1}}.
\end{align*}
on $[0, T)$.
\end{lemma}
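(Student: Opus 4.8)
The plan is to run a standard $\dH^s$ energy estimate, splitting $\partial_x^s(u\partial_x\rho)$ into a top-order transport piece handled by integrating by parts and a commutator handled by Gagliardo--Nirenberg--Sobolev interpolation. Since $s$ is an integer we have $\nm{\rho}_{\dH^s} = \nm{\partial_x^s\rho}_2$, so writing $v := H_a\rho$ and using that (\ref{eq_intro_1DIPM}) reads $\tfrac1g\partial_t\rho = -v\,\partial_x\rho$, I would first differentiate to obtain
\[
\frac{1}{2g}\frac{d}{dt}\nm{\rho}_{\dH^s}^2 = \frac{1}{g}\gen{\partial_x^s\partial_t\rho,\ \partial_x^s\rho} = -\gen{\partial_x^s(v\,\partial_x\rho),\ \partial_x^s\rho},
\]
and then expand via Leibniz as $\partial_x^s(v\,\partial_x\rho) = v\,\partial_x^{s+1}\rho + \sum_{k=1}^{s}\binom{s}{k}\partial_x^k v\,\partial_x^{s+1-k}\rho$.

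For the top-order term I would integrate by parts, $-\gen{v\,\partial_x^{s+1}\rho,\ \partial_x^s\rho} = \tfrac12\gen{\partial_x v,\ (\partial_x^s\rho)^2} \le \tfrac12\nm{\partial_x v}_\infty\nm{\rho}_{\dH^s}^2$. Since $\partial_x v = H_a[\partial_x\rho]$, inequality (\ref{eq_const_HsToLinfty}) gives $\nm{\partial_x v}_\infty \lesssim_{a,s} \nm{\rho}_{H^s}$, producing the first term $\nm{\rho}_{H^s}\nm{\rho}_{\dH^s}^2$ of the claimed bound.

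For the commutator, each summand is estimated by Cauchy--Schwarz as $|\gen{\partial_x^k v\,\partial_x^{s+1-k}\rho,\ \partial_x^s\rho}| \le \nm{\partial_x^k v\,\partial_x^{s+1-k}\rho}_2\,\nm{\rho}_{\dH^s}$, and I would treat the indices in three groups. The index $k=1$ is bounded by $\nm{\partial_x v}_\infty\nm{\partial_x^s\rho}_2 \lesssim_{a,s} \nm{\rho}_{H^s}\nm{\rho}_{\dH^s}$ via (\ref{eq_const_HsToLinfty}) again and is absorbed into the first term. For the interior range $2 \le k \le s-1$ I would use H\"older with $\tfrac1{p_k}+\tfrac1{q_k}=\tfrac12$, the $L^{p_k}$-boundedness of $H_a$ (so $\nm{\partial_x^k v}_{p_k} \lesssim \nm{\partial_x^k\rho}_{p_k}$ for $1<p_k<\infty$), and then \Cref{lem_LWP_GNS} on each factor; the key bookkeeping is that the $\nm{\rho}_{H^s}$-exponents sum to $\tfrac{s+1-(1/p_k+1/q_k)}{s-1/2} = \tfrac{s+\frac12}{s-\frac12} = 1+\tfrac{2}{2s-1}$ and the $\nm{\rho}_\infty$-exponents to $1-\tfrac{2}{2s-1}$, independently of the admissible choice of $p_k$ (one checks such $p_k,q_k \in (1,\infty)$ exist, e.g. taking $\tfrac1{p_k}$ in the nonempty window $\big(\tfrac{k-1}{2s},\tfrac{k}{2s}\big]$ which keeps both GNS exponents in their required ranges). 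The remaining index $k=s$ I would split as $\nm{\partial_x^s v}_2\,\nm{\partial_x\rho}_\infty$, using the $L^2$-boundedness of $H_a$ to get $\nm{\partial_x^s v}_2 \le \nm{\rho}_{\dH^s}$ and interpolating $\nm{\partial_x\rho}_\infty \lesssim_s \nm{\rho}_{H^s}^{2/(2s-1)}\nm{\rho}_\infty^{1-2/(2s-1)}$; combined with the Cauchy--Schwarz factor $\nm{\rho}_{\dH^s}$ and the crude bound $\nm{\rho}_{\dH^s} \le \nm{\rho}_{H^s}$, this lands exactly in the second claimed term. The final displayed inequality then follows from $\nm{\rho}_{H^s} \le \nm{\rho}_2 + \nm{\rho}_{\dH^s} \lesssim \nm{\rho}_\infty + \nm{\rho}_{\dH^s}$ (finite measure of $\T$) together with Young's inequality.

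The main obstacle I anticipate is the endpoint case $k=s$: the factor $\nm{\partial_x\rho}_\infty$ sits at $p=\infty$, which lies just outside the range $p<\infty$ of \Cref{lem_LWP_GNS}, so this interpolation is not literally covered by the stated lemma and must be supplied separately --- either as the standard $L^\infty$ endpoint of the 1D Gagliardo--Nirenberg inequality, or by combining the 1D Morrey--Sobolev embedding (applied to $\partial_x\rho \in H^{s-1}$) with a finite-$p$ instance of \Cref{lem_LWP_GNS}. A secondary point to verify carefully is the admissibility of the H\"older exponents $p_k,q_k \in (1,\infty)$ uniformly over the interior indices, so that both the $L^{p_k}$-boundedness of $H_a$ and the hypotheses $\theta \in [j/s,1]$ of \Cref{lem_LWP_GNS} hold simultaneously; the degree-counting that makes the exponent $\tfrac{2}{2s-1}$ appear is otherwise automatic.
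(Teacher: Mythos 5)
Your proposal is correct and follows essentially the same route as the paper's proof: the same Leibniz expansion, integration by parts plus (\ref{eq_const_HsToLinfty}) for the $k=0,1$ terms, and H\"older combined with the $L^p$-boundedness of $H_a$ and \Cref{lem_LWP_GNS} for the interior range $2 \le k \le s-1$, with identical exponent arithmetic yielding $1+\tfrac{2}{2s-1}$. The only deviation is at $k=s$: the paper sidesteps the $L^\infty$-endpoint interpolation you flag as the main obstacle by simply bounding $\nm{\partial_x \rho}_\infty \lesssim \nm{\partial_x \rho}_{H^{s-1}} \le \nm{\rho}_{H^s}$ via the 1D Morrey--Sobolev embedding (so that term lands in the first summand of the claimed bound rather than the second), which is precisely the alternative fix you yourself propose.
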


\begin{proof}
All of the following computations and estimates hold on the time interval $[0, T)$. We observe
\begin{equation}\label{eq_LWP_HsBound}
\frac{1}{2g}\frac{d}{dt} \nm{\rho}_{\dH^s}^2 = -\gen{H_a \rho \partial_x \rho,\ (-\lap)^s \rho} \le \abs{\gen{\partial_x^s[H_a \rho \partial_x \rho],\ \partial_x^s \rho}} \lesssim_s \sum_{k=0}^s \abs{\gen{\partial_x^k[H_a \rho] \partial_x^{s-k+1} \rho,\ \partial_x^s \rho}}.
\end{equation}
We now estimate each integral term in the sum on the right-hand side of (\ref{eq_LWP_HsBound}). There are four cases that we consider: $k = 0$, $k =1$, $2 \le k < s$, and $k = s$.

Three of the cases can be estimated easily. When $k = 0$, we have
\[
\abs{\gen{H_a\rho \partial_x^{s+1} \rho, \partial_x^s \rho}} = \frac{1}{2}\abs{\gen{H_a[\partial_x \rho],\ (\partial_x^s \rho)^2}} \le \nm{H_a[ \partial_x \rho]}_\infty \nm{\rho}_{\dH^s}^2 \lesssim_{a,s} \nm{\rho}_{H^s}\nm{\rho}_{\dH^s}^2,
\]
where the final inequality holds by (\ref{eq_const_HsToLinfty}). The case of $k = 1$ is completely analogous as
\[
\abs{\gen{H_a[\partial_x \rho] \partial_x^s \rho, \partial_x^s \rho}} \le \nm{H_a[ \partial_x \rho]}_\infty \nm{\rho}_{\dH^s}^2 \lesssim_{a,s} \nm{\rho}_{H^s}\nm{\rho}_{\dH^s}^2.
\]
The remaining simple case is that of $k = s$, where we have
\[
\abs{\gen{\partial_x^s[H_a \rho] \partial_x \rho,\ \partial_x^s \rho}} \le \nm{H_a}_{L^2 \to L^2} \nm{\partial_x \rho}_\infty \nm{\rho}_{\dH^s}^2
\lesssim_{a,s} \nm{\partial_x \rho}_{H^{s-1}} \nm{\rho}_{\dH^s}^2 \le \nm{\rho}_{H^s} \nm{\rho}_{\dH^s}^2,
\]
with the second inequality holding true by the 1D Morrey-Sobolev embedding. The final case is when $s \ge 3$ and $2 \le k \le s-1$. Defining the exponents
\[
p_k := \frac{2s}{s-k+1}, \quad \theta_k := \frac{s-k+1}{s}, \quad \widetilde{p}_k := \frac{2s}{k-1}, \quad \widetilde{\theta}_k = \frac{k}{s}+ \frac{1}{2s^2-s},
\]
we may invoke H\"{o}lder's inequality and \Cref{lem_LWP_GNS} to obtain
\begin{align*}
\abs{\gen{\partial_x^k[H_a \rho] \partial_x^{s-k+1} \rho,\ \partial_x^s \rho}} &\le \nm{H_a}_{L^{\widetilde{p}_k} \to L^{\widetilde{p}_k}} \nm{\partial_x^s\rho}_2 \nm{\partial_x^{s-k+1} \rho}_{p_k} \nm{\partial_x^k \rho}_{\widetilde{p}_k} \\
&\lesssim_{a,s} \nm{\rho}_{\dH^s} \nm{\rho}_{H^s}^{\theta_k}\nm{\rho}_{\infty}^{1-\theta_k}\nm{\rho}_{H^s}^{\widetilde{\theta}_k}\nm{\rho}_{\infty}^{1-\widetilde{\theta}_k} \\
&= \nm{\rho}_{\dH^s} \nm{\rho}_{H^s}^{1 + \frac{2}{2s-1}}\nm{\rho}_{\infty}^{1-\frac{2}{2s-1}}.
\end{align*}
Collecting all of the casework with estimate (\ref{eq_LWP_HsBound}) gives
\begin{align*}
\frac{1}{g}\frac{d}{dt} \nm{\rho}_{\dH^s}^2 &\lesssim_s \nm{\rho}_{H^s}\nm{\rho}_{\dH^s}^2 + \nm{\rho}_{\dH^s} \nm{\rho}_{H^s}^{1 + \frac{2}{2s-1}}\nm{\rho}_{\infty}^{1-\frac{2}{2s-1}} \\
&\lesssim_s \paren{\nm{\rho}_{\dH^s}+\nm{\rho}_\infty}\nm{\rho}_{\dH^s}^2 + \nm{\rho}_{\dH^s} \paren{\nm{\rho}_{\dH^s}^{1 + \frac{2}{2s-1}}+\nm{\rho}_\infty^{1 + \frac{2}{2s-1}}}\nm{\rho}_{\infty}^{1-\frac{2}{2s-1}} \\
&\le \paren*{\nm{\rho}_{\dH^s} + \nm{\rho}_\infty}^3 + \nm{\rho}_{\dH^s}^{2 + \frac{2}{2s-1}}\nm{\rho}_\infty^{1-\frac{2}{2s-1}},
\end{align*}
which completes the proof.
\end{proof}

Collecting \Cref{lem_LWP_AprioriLinfty}, \Cref{lem_LWP_AprioriL2}, and \Cref{lem_LWP_AprioriHs}, one can use the standard argument of a smoothing scheme along with sequential compactness to establish the existence of a smooth bounded solution on $\T$ to (\ref{eq_intro_1DIPM}) for a small time interval depending on the smooth bounded initial data. Then, \Cref{lem_LWP_AprioriUnique} ensures the uniqueness of the solution, and hence \Cref{thm_intro_LWP} holds true for any fixed choice of $a, g > 0$.

\begin{remark}
Consider a solution $\rho$ in $C^\infty(\T)$ to (\ref{eq_intro_1DIPM}) on the time interval $[0, T)$. By differentiating the mean
\[
\frac{1}{2\pi} \int_{-\pi}^\pi \rho(x, t)\, dx
\]
with respect to time, invoking (\ref{eq_intro_1DIPM}), and using the Fourier symbol of $H_a$ discussed in \Cref{sec_const}, one can show that the mean of $\rho$ must be increasing. This may initially seem to be an issue when establishing local well-posedness. However, the preservation of the $L^\infty$ norm given by \Cref{lem_LWP_AprioriLinfty} is enough to ensure the local-in-time existence required for \Cref{thm_intro_LWP}. We also note that growth of the mean for a smooth solution $\rho$ to (\ref{eq_intro_1DIPM}) corresponds with phenomenon of  decay of the mean for smooth solutions to the CCF equation (\ref{eq_const_CCF}) with $\nu = 0$, which can similarly be proven using the Fourier symbol of the Hilbert transform.
\end{remark}

%%%%%%%%%%%%%%%%%%%%%%%%%%%%%%%%%%%%%%%%%%%%%%%%%%%%%%%
%% FINITE TIME BLOW UP OF SMOOTH DATA
%%%%%%%%%%%%%%%%%%%%%%%%%%%%%%%%%%%%%%%%%%%%%%%%%%%%%%%
\section{Finite time blow-up for smooth data}\label{sec_FTBU}

Now that we have established local well-posedness, we assert the following Beale-Kato-Majda type criterion for (\ref{eq_intro_1DIPM}).

\begin{proposition}\label{prop_FTBU_BKM}
Fix $a, g > 0$ and suppose $\rho$ is a solution  to (\ref{eq_intro_1DIPM}) in $C^\infty(\T \times [0, T_*))$ corresponding to an initial data $\rho_0 \in C^\infty(\T)$. If $0 < T_* < \infty$ is the first finite blow-up time such that $\rho$ cannot be continued in $C^\infty(\T)$, then we must have that
\[
\lim_{t \to T_*} \int_0^t \nm{\partial_x \rho(\cdot, s)}_\infty\, ds = \infty.
\]
\end{proposition}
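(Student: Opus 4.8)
The plan is to argue by contrapositive. Assuming
\[
M := \lim_{t\to T_*}\int_0^t \nm{\partial_x \rho(\cdot,s)}_\infty\, ds < \infty,
\]
I will show that $\nm{\rho(\cdot,t)}_{\dH^s}$ stays bounded on $[0,T_*)$ for every integer $s\ge 2$. Since $T_*<\infty$ and $\nm{\rho}_\infty=\nm{\rho_0}_\infty$ is conserved by \Cref{lem_LWP_AprioriLinfty} (so that $\nm{\rho}_2\le\sqrt{2\pi}\,\nm{\rho_0}_\infty$ on $\T$ is bounded as well), uniform control of all $\dH^s$ norms lets one pass to the limit $t\to T_*$ in every Sobolev space, extend $\rho$ to a $C^\infty(\T)$ datum at time $T_*$, and restart the local solution via \Cref{thm_intro_LWP}, contradicting the maximality of $T_*$.

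The central tool is a sharpened version of \Cref{lem_LWP_AprioriHs}. Starting from
\[
\frac{1}{2g}\frac{d}{dt}\nm{\rho}_{\dH^s}^2 = -\gen{H_a\rho\,\partial_x\rho,\ (-\lap)^s\rho} = -\gen{\partial_x^s(H_a\rho\,\partial_x\rho),\ \partial_x^s\rho},
\]
I would isolate the top-order term $\gen{H_a\rho\,\partial_x^{s+1}\rho,\ \partial_x^s\rho}$ and integrate by parts to produce $-\tfrac12\gen{H_a[\partial_x\rho],\ (\partial_x^s\rho)^2}$, while estimating the remainder $[\partial_x^s,H_a\rho]\partial_x\rho$ by the standard Kato--Ponce commutator inequality. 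Using that $H_a$ commutes with $\partial_x$ and is bounded on $L^2$, this yields
\[
\frac{1}{g}\frac{d}{dt}\nm{\rho}_{\dH^s}^2 \lesssim_{a,s} \paren*{\nm{H_a[\partial_x\rho]}_\infty + \nm{\partial_x\rho}_\infty}\nm{\rho}_{\dH^s}^2.
\]
The gain over \Cref{lem_LWP_AprioriHs} is that the growth rate is now governed by $\nm{H_a[\partial_x\rho]}_\infty$ rather than the full $\nm{\rho}_{H^s}$.

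The main obstacle is that $H_a$ is a singular integral operator and is \emph{not} bounded on $L^\infty$, so $\nm{H_a[\partial_x\rho]}_\infty$ cannot be controlled by $\nm{\partial_x\rho}_\infty$ alone. Here I would invoke a logarithmic (Brezis--Gallouet/BKM-type) inequality. Writing $H_a = H - (H-H_a)$ and recalling from \Cref{sec_const2} that $(H-H_a)\circ\partial_x$ is smoothing, so that $\nm{(H-H_a)[\partial_x\rho]}_\infty \lesssim_a \nm{\rho}_2$ is a bounded (conserved) quantity, it suffices to estimate the Hilbert transform piece. For the Hilbert transform on $\T$ one has, for any $\sigma>\tfrac12$,
\[
\nm{Hf}_\infty \lesssim_\sigma \nm{f}_\infty\log\!\paren*{e + \nm{f}_{H^\sigma}} + \nm{f}_2,
\]
which I would establish (or cite from Kozono--Taniuchi-type results) by splitting the principal-value integral into a neighborhood of the singularity, bounded by $\nm{f}_\infty$ times the scale, and its complement, bounded by Cauchy--Schwarz against the $H^\sigma$ tail, then optimizing the scale. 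Applying this with $f=\partial_x\rho$ and $\sigma=s-1$, and using $\nm{\partial_x\rho}_{H^{s-1}}\lesssim\nm{\rho}_{H^s}$, gives
\[
\nm{H_a[\partial_x\rho]}_\infty \lesssim_{a,s} \nm{\partial_x\rho}_\infty\log\!\paren*{e+\nm{\rho}_{H^s}} + C,
\]
with $C$ depending only on the conserved quantity $\nm{\rho_0}_\infty$.

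Combining the last two displays and setting $Y(t) := e + \nm{\rho(\cdot,t)}_{\dH^s}^2$ (so that $\nm{\rho}_{H^s}^2 \lesssim Y$), I obtain a logarithmic Grönwall inequality
\[
\frac{d}{dt}Y \lesssim_{a,g,s} \paren*{\nm{\partial_x\rho}_\infty + 1}\,Y\log Y.
\]
Dividing by $Y\log Y$ and noting $\tfrac{d}{dt}\log\log Y = Y'/(Y\log Y)$, integration shows that $\log\log Y(t)$ grows at most linearly in $t+\int_0^t\nm{\partial_x\rho}_\infty\,ds$, whence $Y(t)\le \exp\exp\paren{C(t+M)}$ remains finite on the bounded interval $[0,T_*)$. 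This gives the desired uniform $\dH^s$ bound for every $s$ and closes the contradiction. I expect the logarithmic singular-integral inequality to be the only genuinely delicate step; the commutator estimate and the log-Grönwall argument are routine.
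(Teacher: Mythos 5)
Your proposal is correct, and it is essentially the same argument the paper relies on: the paper's proof of \Cref{prop_FTBU_BKM} simply cites Proposition 5.2 of \cite{D08}, whose proof is precisely this BKM-type scheme---a continuation argument combining commutator energy estimates, a logarithmic bound on the sup-norm of the singular-integral velocity term, and log-Gr\"{o}nwall. The only differences are cosmetic: \cite{D08} works with Besov norms and paraproducts rather than your Sobolev energy estimates with Kato--Ponce, and the paper adapts the singular-integral estimates to $H_a$ by noting the pointwise kernel domination $|K_a(y)| \le 1/(\pi|y|)$, whereas you use the equivalent splitting $H_a = H - (H - H_a)$ with a smoothing remainder.
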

As $H_a$ is a singular integral operator with kernel $K_a$ bounded above by the kernel $1/(\pi y)$ of the Hilbert transform (see (\ref{eq_intro_KaKernel})), the proof of Proposition 5.2 in \cite{D08} can be immediately adapted with very minor modifications to establish \Cref{prop_FTBU_BKM}.

This criterion reduces the proof of \Cref{thm_intro_FTBU} to showing that the class of initial data depicted in \Cref{fig_intro_InitialData} always leads to finite time blow-up in some way. Our argument to do so is inspired by \cite{K18}, which proves finite time blow up for (\ref{eq_const_CCF}) on the whole line with $\nu \ge 0$ and $0 < \alpha < 1/2$ by leveraging the preservation of monotonicity under (\ref{eq_const_CCF}) and by extending an integral inequality for the Hilbert transform that was first derived in \cite{CCF07} (and later also used in \cite{LR08}). The corresponding estimates for $H_a$ in the periodic setting pose some challenges that are resolved below.

\begin{lemma}\label{lem_FTBU_monotonicty}
Fix $a, g > 0$ and suppose $\rho$ is a solution to (\ref{eq_intro_1DIPM}) in $C^\infty(\T \times [0, T))$ corresponding to the initial data $\rho_0 \in C^\infty(\T)$. Suppose further that $\rho_0$ is even and nonnegative with $\rho_0(0) = 0$ and $\rho_0' \ge 0$ on $[0,\pi)$. Then, for all time $t \in [0, T)$, $\rho$ remains even and nonnegative with $\rho(0, t) = 0$ and $\partial_x \rho \ge 0$ on $[0,\pi)$.
\end{lemma}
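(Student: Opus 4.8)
The plan is to exploit the pure-transport structure of (\ref{eq_intro_1DIPM}) together with the flow map $\Phi_t$ already constructed in the proof of \Cref{lem_LWP_AprioriLinfty}, for which $\rho(x,t) = \rho_0(\Phi_t^{-1}(x))$. Nonnegativity is then immediate: since $\rho$ is constant along characteristics, $\rho(x,t)$ takes values in the range of $\rho_0$, so $\rho_0 \ge 0$ forces $\rho(\cdot,t) \ge 0$. The two genuinely structural facts I need are that $\rho$ stays even, and that the flow preserves the interval $[0,\pi]$; the monotonicity and the vanishing at the origin then follow by composing monotone maps and locating fixed points of the flow.

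For evenness, the key observation is a parity property of $H_a$: because the kernel $K_a(y) = \frac{1}{\pi y}\frac{a^2}{y^2+a^2}$ is odd, a change of variables $y \mapsto -y$ inside the principal value in (\ref{eq_intro_KaKernel}) gives the identity $H_a[f(-\,\cdot\,)](x) = -(H_a f)(-x)$; in particular $H_a f$ is odd whenever $f$ is even. Using this I would set $\tilde\rho(x,t) := \rho(-x,t)$ and verify directly that $\tilde\rho$ is a $C^\infty(\T)$ solution of (\ref{eq_intro_1DIPM}). Differentiating, $\partial_t \tilde\rho(x,t) = -g(H_a\rho)(-x,t)\,(\partial_x\rho)(-x,t)$, and substituting $(\partial_x\rho)(-x,t) = -\partial_x\tilde\rho(x,t)$ together with $(H_a\rho)(-x,t) = -(H_a\tilde\rho)(x,t)$ yields $\partial_t\tilde\rho + g H_a\tilde\rho\,\partial_x\tilde\rho = 0$. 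Since $\tilde\rho(\cdot,0) = \rho_0(-\,\cdot\,) = \rho_0$ by hypothesis, the uniqueness statement \Cref{lem_LWP_AprioriUnique} forces $\tilde\rho = \rho$, i.e.\ $\rho(\cdot,t)$ is even for every $t \in [0,T)$.

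With evenness in hand, $u = gH_a\rho$ is odd and, being smooth, continuous, hence $u(0,t) = 0$; moreover by $2\pi$-periodicity $u(\pi,t) = u(-\pi,t)$ while by oddness $u(-\pi,t) = -u(\pi,t)$, so $u(\pi,t) = 0$ as well. Thus $x = 0$ and $x = \pi$ are stationary points of the characteristic ODE $\dot X = u(X,t)$. The fixed point at the origin gives $\rho(0,t) = \rho_0(\Phi_t^{-1}(0)) = \rho_0(0) = 0$. For monotonicity, uniqueness of trajectories prevents any characteristic starting in $(0,\pi)$ from reaching the stationary endpoints, so $\Phi_t$ maps $[0,\pi]$ bijectively onto itself; being a smooth flow it is an orientation-preserving diffeomorphism of $[0,\pi]$, and therefore $\Phi_t^{-1}$ is increasing there. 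Since $\rho_0$ is non-decreasing on $[0,\pi]$, the composition $\rho(\cdot,t) = \rho_0 \circ \Phi_t^{-1}$ is non-decreasing on $[0,\pi]$, that is $\partial_x\rho \ge 0$ on $[0,\pi)$.

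I expect the main obstacle to be bookkeeping rather than analysis: carefully justifying the parity identity for the principal-value operator $H_a$ (the $y \mapsto -y$ substitution must respect the symmetric truncation defining the principal value) and confirming that the flow map has enough regularity for the interval-invariance argument. The latter is already supplied by the proof of \Cref{lem_LWP_AprioriLinfty}, where $u = gH_a\rho$ is shown to be $C^1$ in space, so that trajectories exist, are unique, and cannot cross the stationary points $0$ and $\pi$. An alternative to the flow-map monotonicity argument would be to track $q := \partial_x\rho$, which satisfies $\partial_t q + u\partial_x q = -g(H_a q)\,q$ along characteristics; but since the equation carries no dissipation, the composition-of-monotone-maps argument above seems cleaner and avoids any sign analysis of the nonlocal factor $H_a q$.
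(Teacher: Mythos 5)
Your proposal is correct, and for three of the four conclusions (evenness via the symmetry $\widetilde\rho(x,t):=\rho(-x,t)$ plus uniqueness, nonnegativity via the flow map, and $\rho(0,t)=0$ via the fixed point of the characteristics at the origin) it coincides with the paper's proof. Where you genuinely diverge is the monotonicity step. The paper differentiates the equation in space to get $\partial_t\eta + u\,\partial_x\eta = -g\,\eta\, H_a\eta$ for $\eta:=\partial_x\rho$, solves this ODE along characteristics to obtain
\[
\eta(x,t) = \eta(\Phi_t^{-1}(x),0)\,\exp\paren*{-g\int_0^t H_a\eta(\Phi_s\circ\Phi_t^{-1}(x),s)\,ds},
\]
and reads off the sign of $\eta$ from the sign of $\rho_0'$, using $u(0,t)=u(\pi,t)=0$ only to ensure $\Phi_t^{-1}([0,\pi))\subseteq[0,\pi)$. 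You instead avoid the derivative equation entirely: from the same endpoint conditions you deduce that $\Phi_t$ restricts to an increasing homeomorphism of $[0,\pi]$ fixing the endpoints, so $\rho(\cdot,t)=\rho_0\circ\Phi_t^{-1}$ is a composition of non-decreasing maps and hence non-decreasing, giving $\partial_x\rho\ge 0$ by smoothness. Both routes hinge on exactly the same structural input (evenness forces $u$ to vanish at $0$ and $\pi$, so $[0,\pi]$ is invariant under the flow); yours is more elementary in that it needs no sign analysis of the nonlocal factor and no exponential formula, while the paper's version has the advantage of producing the quantitative representation \eqref{eq_FTBU_eta} for $\partial_x\rho$, which makes the mechanism of potential gradient growth explicit. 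The two points you flag as bookkeeping — the parity identity $H_a[f(-\,\cdot\,)](x) = -(H_a f)(-x)$ (the substitution $y\mapsto -y$ preserves the symmetric truncation in the principal value since $K_a$ is odd) and the bijectivity of $\Phi_t$ on $[0,\pi]$ (a characteristic landing outside $(0,\pi)$ would have to cross a stationary point, contradicting uniqueness for the ODE, applied forward or backward in time) — both check out, so the argument is complete.
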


\begin{proof}
Defining $\widetilde{\rho}(x,t) := \rho(-x,t)$, one shortly sees that $\widetilde{\rho}$ solves (\ref{eq_intro_1DIPM}) on $[0, T)$ with $\widetilde{\rho}(x,0) = \rho_0(x)$. By the uniqueness obtained in \Cref{thm_intro_LWP}, we conclude that $\rho = \widetilde{\rho}$ on $[0, T)$ and hence $\rho$ is even.

As in the proof of \Cref{lem_LWP_AprioriLinfty}, we note that (\ref{eq_intro_1DIPM}) is a transport equation with advection velocity $u = g H_a \rho$ smooth on $[0,T)$. Thus, one can construct an invertible flow map $\Phi_t(x)$ such that
\[
\frac{d}{dt} \Phi_t(x) = u(\Phi_t(x), t), \quad \Phi_0(x) = x
\]
for each fixed $x \in \T$. In turn, it holds that $\rho(x, t) = \rho_0(\Phi_t^{-1}(x))$ and so it follows that $\rho$ is nonnegative for all time in $[0, T)$. Moreover, $\rho$ being even implies that $u(0, t) = gH_a\rho(0, t) = 0$. Hence the origin is fixed under the flow, with $\Phi_t(0) = 0$ for all $t \in [0, T)$ and thus $\rho(0, t) = 0$.

Lastly, differentiating both sides of (\ref{eq_intro_1DIPM}) in space, we obtain
\[
\partial_t \eta + u \partial_x \eta = -g\eta H_a \eta, \quad u := gH_a \rho
\]
for $\eta := \partial_x \rho$. This is also a transport equation under the same velocity $u$, and therefore the same flow map $\Phi_t(x)$, that yields
\[
\frac{d}{dt} \eta(\Phi_t(x), t) = - g\eta(\Phi_t(x), t) H_a \eta(\Phi_t(x), t).
\]
We solve to get
\begin{equation}\label{eq_FTBU_eta}
\eta(x, t) = \eta(\Phi_t^{-1}(x), 0) \exp\paren*{-g\int_0^t H_a \eta(\Phi_s\circ \Phi_t^{-1}(x), s)\, ds }.
\end{equation}
Again recalling that $\rho(x,t)$ is even for all time $t \in [0, T)$, we in turn have $u(0, t) = gH_a\rho(0, t) = 0$ and $u(\pi, t) = gH_a\rho(\pi, t) = 0$. As $\eta(x, 0) = \rho'_0(x) \ge 0$ on $[0, \pi)$ by assumption, it follows from  (\ref{eq_FTBU_eta}) that $\eta = \partial_x \rho \ge 0$ on $[0, \pi)$ for all time in $[0, T)$.
\end{proof}

\begin{proposition}\label{prop_FTBU_HaInequality}
Let $f \in C^\infty(\T)$ be even and nonnegative with $f(0) = 0$ and $f' \ge 0$ on $[0,\pi)$. Then for every $a, \sigma > 0$ we have
\begin{equation}\label{eq_FTBU_KeyIneq}
-\int_0^{\pi/2} \frac{H_a f(x) f'(x)}{x^{\sigma}}\, dx \ge C_{a,\sigma} \int_0^{\pi/2} \frac{f(x)^2}{x^{1+\sigma}}\, dx,
\end{equation}
where $C_{a,\sigma} > 0$ is a universal constant depending only on $\sigma$ and $a$.
\end{proposition}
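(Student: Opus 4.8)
The plan is to adapt the C\'ordoba--C\'ordoba--Fontelos integral-inequality scheme of \cite{CCF07,K18} to the kernel $K_a$ on the circle. First I would dispose of the right-hand side by integrating by parts in $x$: since $f(0)=0$ and the boundary term produced at $x=\pi/2$ is nonpositive,
\[
\int_0^{\pi/2}\frac{f(x)^2}{x^{1+\sigma}}\,dx\le\frac{2}{\sigma}\int_0^{\pi/2}\frac{f(x)f'(x)}{x^\sigma}\,dx,
\]
so it suffices to bound $-\int_0^{\pi/2}x^{-\sigma}f'(x)H_af(x)\,dx$ below by a multiple of $\int_0^{\pi/2}x^{-\sigma}f(x)f'(x)\,dx$. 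For the left-hand side, using that $K_a$ is odd and $f$ is even,
\[
-H_af(x)=\int_0^\infty\bigl(f(x+y)-f(x-y)\bigr)K_a(y)\,dy,\qquad K_a(y)>0\ \ (y>0),
\]
and the hypotheses on $f$ (even, $2\pi$-periodic, nondecreasing on $[0,\pi]$) make the integrand nonnegative for $y\in(0,\pi-x)$, since there $x+y<\pi$, $|x-y|<\pi$, and $x+y>|x-y|$.

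On this near range there is no periodic wraparound, so $f(x+y)-f(x-y)=\int_{|x-y|}^{x+y}f'(s)\,ds$, and exchanging the order of integration rewrites the near contribution to $-\int_0^{\pi/2}x^{-\sigma}f'H_af$ as a bilinear form $\iint x^{-\sigma}f'(x)f'(s)P_a(x,s)$ over a subregion of $(0,\pi/2)^2$, where
\[
P_a(x,s):=\int_{|x-s|}^{x+s}K_a(y)\,dy=\frac{1}{\pi}\log\!\left(\frac{x+s}{|x-s|}\,\frac{\sqrt{(x-s)^2+a^2}}{\sqrt{(x+s)^2+a^2}}\right).
\]
The target form (after the integration by parts above) has kernel $x^{-\sigma}\mathbf{1}_{\{s<x\}}$. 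The comparison is driven entirely by the diagonal: for $x,s\in(0,\pi/2)$ one has $y\le\pi$, so $K_a(y)\ge c_a/(\pi y)$ and hence $P_a(x,s)\gtrsim_a\log\frac{1}{|x-s|}$ as $s\to x$. This integrable logarithmic singularity lets the near-diagonal part of the left form dominate the whole right form, with the off-diagonal pieces absorbed by an AM--GM/Schur estimate exactly as in the Hilbert-transform case; this is where the CCF inequality is reproduced, now with an $a$-dependent constant.

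The main obstacle is the periodic far field $y\gtrsim\pi-x$, which cannot be discarded. There $x\pm y$ wrap around $\T$, so $f(x+y)-f(x-y)$ loses its sign and the tail $\int_{\pi-x}^\infty(f(x+y)-f(x-y))K_a(y)\,dy$ may be negative; worse, the naive $L^\infty$ bound on it is of the same order as $\|f\|_\infty\int x^{-\sigma}f'$, which does not beat the main term. To control it I would use the rapid decay $K_a(y)\sim a^2/(\pi y^3)$ together with the even/monotone structure---periodizing $K_a$ and tracking the alternating signs contributed by successive monotonicity lobes of $f$, or symmetrizing the outer $x$-integral---to show that the far-field contribution is dominated by a fixed fraction of the positive near-field term. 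Quantifying both the diagonal lower bound and this tail bound is precisely what forces $C_{a,\sigma}$ to depend on $a$, and is the step at which the periodic $H_a$ analysis genuinely departs from the real-line computation of \cite{CCF07}.
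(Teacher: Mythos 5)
Your setup is sound and in fact mirrors the computations behind \Cref{lem_FTBU_HaEstimate}: the oddness of $K_a$ plus evenness and monotonicity of $f$ give the sign of $f(x+y)-f(x-y)$ on the unwrapped range, the antiderivative $Q_a(y)=\tfrac1\pi\log\bigl(|y|/\sqrt{y^2+a^2}\bigr)$ produces your kernel $P_a$, and on a proportional annulus $\{x/q<s<x\}$ one even has the uniform bound $P_a(x,s)\ge \tfrac{a^2}{\pi(\pi^2+a^2)}\log\tfrac{q+1}{q-1}$, valid for all $x\le\pi/2$. The genuine gap is the step you describe as ``off-diagonal pieces absorbed by an AM--GM/Schur estimate exactly as in the Hilbert-transform case.'' After your Hardy reduction you must dominate the full lower-triangular form $\iint_{s<x}x^{-\sigma}f'(x)f'(s)\,ds\,dx$ by the near-diagonal $P_a$-form, and no soft absorption does this: AM--GM on the cross terms produces $\int x^{-\sigma}f'(x)^2\,dx$, which is not controlled by anything available --- if $f'$ is a single bump of width $\eps$ and unit mass at $x_0$, then $\int x^{-\sigma}(f')^2\approx x_0^{-\sigma}/\eps$, while the near-diagonal form is only $O\paren{x_0^{-\sigma}\log(1/\eps)}$. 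Moreover, the Hilbert-transform proofs you appeal to do not proceed by Schur-type absorption: in \cite{CCF07} and \cite{K18} the full bilinear kernel is nonnegative, so off-diagonal pieces are simply \emph{discarded}, and the remaining content is a scalar change-of-variables inequality, namely $\int_0^{\pi/2}x^{-\sigma}\paren{f(qx)-f(x/q)}f'(x)\,dx\gtrsim_{q,\sigma}\int_0^{\pi/2}x^{-1-\sigma}f(x)^2\,dx$, which is a genuine lemma (exactly the one the paper imports from the proof of Proposition 6.4 in \cite{K18}), not an elementary estimate. Once you restrict your near field to $\{x/q<s<x\}$, what remains to prove is precisely that lemma (or your one-sided variant with $f(x)-f(x/q)$); your sketch black-boxes it under an inaccurate reference, so the core of the proposition is not actually proved.

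The far-field treatment is also only a plan, and it is where the paper does concrete work rather than absorption. Instead of dominating the wraparound tail by a fraction of the near term, the paper pairs the monotonicity lobes of the $2\pi$-periodic function $y\mapsto f(x-y)-f(x+y)$ (odd about $y=\pi$ modulo $2\pi$) against the strict decrease of $K_a$, showing every far contribution has a \emph{favorable} sign except the single window $y\in(2\pi-x,2\pi)$; that window is then kept exactly and merged with the near field into the explicit kernel $G_a(x,y)$ of \Cref{lem_FTBU_HaEstimate}. The price is that one must then verify $G_a\le 0$ on $[0,2x]$, that $\max\{G_a(x,x/q),G_a(x,qx)\}=G_a(x,qx)$, and that $-G_a(\pi/2,q\pi/2)>0$ uniformly for $x \le \pi/2$ --- elementary but unavoidable monotonicity computations that produce the constant $C_{a,\sigma}$, and for which your sketch has no substitute. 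Two smaller points: your opening integration by parts discards the boundary term at $x=0$, which requires $f(x)^2x^{-\sigma}\to0$ and hence is automatic only for $\sigma<2$ (the proposition allows all $\sigma>0$); and your near-field region is a subset of $(0,\pi/2)\times(0,\pi)$ rather than $(0,\pi/2)^2$, harmless here since $P_a\ge0$ and $f'\ge0$ on $[0,\pi)$, but worth stating.
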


\begin{remark}
Note that for large $\sigma$ the right side may be infinite; the statement still holds in the sense that in this case the left side must be infinite too.
\end{remark}

The proof of this proposition requires some involved computations, with the main difficulties arising from estimating the kernel $K_a$ in (\ref{eq_intro_KaKernel}) as well as the lack of monotonicity of $f$ on the whole real line (as we only have monotonicity on each half period). Thus, let us first see how to use \Cref{prop_FTBU_BKM}, \Cref{lem_FTBU_monotonicty}, and \Cref{prop_FTBU_HaInequality} in order to establish the main result of this note.

\begin{proof}[Proof of \Cref{thm_intro_FTBU}]
Suppose the initial data $\rho_0 \in C^\infty(\T)$ is non-identically zero, nonnegative, and even with $\rho_0(0) = 0$ and $\rho_0' \ge 0$ on $[0,\pi)$. Fix $a, g > 0$ and let $\rho(x, t)$ denote the corresponding unique local-in-time smooth, bounded, non-negative solution to (\ref{eq_intro_1DIPM}). Now, assume for contradiction that $\rho$ stays smooth and bounded for all time. Then we can define the function
\[
J(t) := \int_0^{\pi/2} \frac{\rho(x,t)}{x^{1+\delta}}\, dx
\]
where $0 < \delta < 1$ is arbitrarily fixed. We compute
\begin{align*}
J'(t) &= -g\int_0^{\pi/2} \frac{H_a\rho(x,t) \partial_x \rho(x,t)}{x^{1+\delta}}\, dx \\
&\ge gC_{a,1+\delta} \int_0^{\pi/2} \frac{\rho(x,t)^2}{x^{2+\delta}}\, dx \\
&\ge gC_{a,1+\delta} \paren*{\int_0^{\pi/2} \frac{1}{x^{\delta}}\, dx}^{-1} \paren*{\int_0^{\pi/2} \frac{\rho(x,t)}{x^{1+\delta}}\, dx}^2 \\
&= \frac{g(1-\delta)C_{a,1+\delta}}{(\pi/2)^{1-\delta}} J(t)^2,
\end{align*}
where we have invoked \Cref{lem_FTBU_monotonicty}, \Cref{prop_FTBU_HaInequality}, and H\"{o}lder's inequality. As $J(0) > 0$, it follows that $J(t)$ must blow up in finite time, leading to a contradiction. \Cref{prop_FTBU_BKM} completes the proof.
\end{proof}

All that remains is to prove \Cref{prop_FTBU_HaInequality}. To do so, we first prove the following lemma which leverages the assumption of $f$ being monotonically increasing on the half period $[0, \pi)$ (as compared to the monotonicity on the real line used for the analogous integral inequalities in \cites{CCF07,LR08,K18}). Recall that any functions $f$ defined on $\T$ are considered to be $2\pi$-periodic functions over the real line.

\begin{lemma}\label{lem_FTBU_HaEstimate}
Fix $a > 0$. Given $f$ satisfying the hypotheses of \Cref{prop_FTBU_HaInequality}, it holds that
\begin{equation}\label{eq_FTBU_HaIneq}
H_a f(x) \le \int_0^{2x} f'(y) G_a(x, y)\, dy
\end{equation}
for all $0 < x \le \pi/2$, where we define
\[
G_a(x,y) := \frac{1}{2\pi}\log\paren*{
\frac{1+\frac{a^2}{x^2}}{1+\frac{a^2}{(x-y)^2}}\cdot\frac{1+\frac{a^2}{(2\pi-x)^2}}{1+\frac{a^2}{(x-y+2\pi\sgn(y-x))^2}}}.
\]
\end{lemma}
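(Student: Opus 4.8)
The plan is to integrate by parts so that the singular operator $H_a$ acts through the antiderivative of its kernel, then to exploit the periodicity and evenness of $f$ to collapse everything onto the half-period $[0,\pi]$, where $f'\ge 0$, and finally to compare the resulting convolution kernel against $G_a$ pointwise. First I would record that the kernel $K_a$ in \eqref{eq_intro_KaKernel} has the even primitive
\[
\Psi_a(v) := -\frac{1}{2\pi}\log\paren*{1 + \frac{a^2}{v^2}}, \qquad \Psi_a'(v) = K_a(v),
\]
which is negative, even, increasing on $(0,\infty)$, and decays like $v^{-2}$ at infinity; this is the source of the logarithms in $G_a$. Writing $H_af(x)$ in its odd form $\int_0^\infty [f(x-y)-f(x+y)]K_a(y)\,dy$ and integrating by parts (the boundary terms vanish since $f(x-y)-f(x+y)=O(y)$ near $0$ while $\Psi_a\to 0$ at infinity) yields the clean identity $H_af(x) = \int_\R f'(t)\,\Psi_a(t-x)\,dt$.

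Next I would use that $f'$ is $2\pi$-periodic to fold the integral over $\R$ onto a single period, producing the periodization $P(u) := \sum_{n\in\Z}\Psi_a(u+2\pi n)$, and then use that $f'$ is odd (because $f$ is even) to restrict to $[0,\pi]$:
\[
H_af(x) = \int_0^\pi f'(t)\,\Phi(t)\,dt, \qquad \Phi(t) := P(t-x)-P(t+x).
\]
Here $P$ is even, $2\pi$-periodic, and increasing on $(0,\pi)$, so a short symmetry argument (comparing the circle-distances of $t-x$ and $t+x$ to the origin) shows $\Phi(t)\le 0$ for all $t\in(0,\pi)$, with equality only at $t=\pi$. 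Since $f'\ge 0$ on $[0,\pi)$ by hypothesis, the contribution of $(2x,\pi)$ is therefore nonpositive and may be discarded, so that \Cref{lem_FTBU_HaEstimate} reduces to the pointwise kernel bound $\Phi(t)\le G_a(x,t)$ on $(0,2x)$; multiplying by $f'\ge 0$ and integrating then gives \eqref{eq_FTBU_HaIneq}.

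The main obstacle is precisely this pointwise inequality. Rewriting $G_a(x,t) = \Psi_a(t-x) + \Psi_a(t-x-2\pi\sgn(t-x)) - \Psi_a(x) - \Psi_a(2\pi-x)$ using that $\Psi_a$ is even, one sees that its first two terms are exactly the two images of $t-x$ nearest the origin occurring in $P(t-x)$, so that $\Phi(t)\le G_a(x,t)$ is equivalent to
\[
\sum_{n\ne 0,\,-\sgn(t-x)}\Psi_a(t-x+2\pi n) + \Psi_a(x) + \Psi_a(2\pi-x) \le P(t+x).
\]
The delicate point, as foreshadowed in the discussion of the kernel $K_a$ and the loss of monotonicity of $f$ off the half-period, is that one cannot simply bound the leftover tail on the left by $0$ and then compare $\Psi_a(x)+\Psi_a(2\pi-x)$ with $P(t+x)$: that weaker inequality fails for large $a$, so the negativity of the discarded images must be used quantitatively to offset $P(t+x)$.

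I expect to establish the displayed inequality by exploiting the monotonicity of $\Psi_a$ together with the explicit closed form $P(u) = -\frac{1}{2\pi}\log\frac{\cosh a-\cos u}{1-\cos u}$, obtained by summing the periodic images of $\Psi_a$ in closed form, pairing the images of $t-x$ against those of $t+x$ and carrying the surplus from the dominant (nearest-origin) terms to absorb the tails; equivalently this can be run as a purely termwise comparison of the two families of rational images. The two regimes $t<x$ and $t>x$—selected by the $\sgn(t-x)$ in $G_a$ and reflecting exactly the asymmetry created by the lack of monotonicity of $f$ across the origin—will have to be treated separately, and I anticipate that verifying the inequality uniformly in $a>0$ (where the $a\to\infty$ limit degenerates, as the large-$a$ failure above indicates) will be the most technical step.
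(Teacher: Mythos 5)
Your route is genuinely different from the paper's, and its skeleton is correct. The paper never isolates an $f$-independent kernel inequality: it splits the principal value, uses the sign pattern of $y\mapsto f(x-y)-f(x+y)$ together with the monotone decay of $K_a$ to discard two negative integrals involving $f$ itself, and only then integrates by parts, with $G_a$ emerging as bookkeeping of the boundary terms. You instead integrate by parts at the outset (your identity $H_af(x)=\int_{\mathbb{R}} f'(t)\,\Psi_a(t-x)\,dt$ is correct, and your $\Psi_a$ is exactly the paper's antiderivative $Q_a$), then periodize and fold to get $H_af(x)=\int_0^\pi f'(t)\,\Phi(t)\,dt$ with $\Phi(t)=P(t-x)-P(t+x)$, prove $\Phi\le 0$ on $(0,\pi)$, discard $(2x,\pi)$ using $f'\ge 0$, and reduce the lemma to the pointwise bound $\Phi(t)\le G_a(x,t)$ on $(0,2x)$. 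All of these reduction steps are sound, including the closed form for $P$ and the sign analysis of $\Phi$.

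The gap is at the crux: the inequality $\Phi\le G_a$ is only strategized, never proved, and the strategy as phrased (``purely termwise comparison of the two families of images'') fails if taken literally: since $\Psi_a$ increases with distance from the origin, the images of $t+x$ at distances $2\pi(n+1)-(t+x)$ lie \emph{closer} to the origin than the corresponding images of $t-x$ at distances $2\pi(n+1)-|t-x|$, so half of the one-on-one comparisons go the wrong way. The comparison must be made on \emph{pairs} of images, and then it closes cleanly. For $s\in(0,2\pi)$ set
\[
g(s):=\Psi_a(s)+\Psi_a(2\pi-s),\qquad F(s):=\sum_{n\ge1}\bigl[\Psi_a(2\pi n+s)+\Psi_a(2\pi(n+1)-s)\bigr],
\]
so that $P=F+g$ on $(0,2\pi)$ and, by evenness of $\Psi_a$, $G_a(x,t)=g(|t-x|)-g(x)$. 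Then for $0<t<2x\le\pi$, $t\ne x$,
\[
\Phi(t)-G_a(x,t)=\bigl[F(|t-x|)-F(t+x)\bigr]+\bigl[g(x)-g(t+x)\bigr].
\]
Every paired function $s\mapsto\Psi_a(2\pi n+s)+\Psi_a(2\pi(n+1)-s)$ with $n\ge0$ (the case $n=0$ being $g$) has derivative $K_a(2\pi n+s)-K_a(2\pi(n+1)-s)>0$ for $s\in(0,\pi)$ because $K_a$ is strictly decreasing on $(0,\infty)$, and is invariant under $s\mapsto 2\pi-s$; hence $F$ and $g$ are increasing on $(0,\pi]$ and symmetric about $\pi$. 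Since $t<2x\le\pi$ gives $|t-x|\le\min(t+x,\,2\pi-t-x)$ and $x\le\min(t+x,\,2\pi-t-x)$, both brackets above are $\le0$, which is exactly the inequality you need. Note that this argument is uniform in $a>0$: your worry that the $a\to\infty$ limit degenerates applies to the weaker comparison you correctly rejected (dropping the far images of $t-x$ entirely), not to the paired comparison. With this step supplied, your proof is complete and is an attractive alternative to the paper's: it trades the paper's $f$-dependent sign analysis for a purely kernel-level monotonicity estimate.
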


\begin{proof}
Fix $a > 0$ and $0 < x \le \pi/2$. Recall the definition of the kernel $K_a(y)= \frac{a^2}{\pi y(y^2+a^2)}$ in (\ref{eq_intro_KaKernel}) and note that $K_a$ is odd. Thus, by a change of variables we obtain
\begin{align*}\label{eq_FTBU_HaIneq1}
\paren*{\int_{-\infty}^{-\eps} + \int_\eps^\infty} f(x-y) K_a(y)\, dy &= \paren*{\int_0^{x-\eps} + \int_{x+\eps}^{2x}} f(y)K_a(x-y)\, dy \\
&\quad + \int_x^\infty (f(x-y)-f(x+y))K_a(y)\, dy \numberthis
\end{align*}
for all $\eps > 0$ sufficiently small. To simplify the right-hand side of (\ref{eq_FTBU_HaIneq1}), we note by the periodicity of $f$ that the function $y \mapsto f(x-y)-f(x+y)$ is itself $2\pi$-periodic. Furthermore, due to the evenness and monotonicity assumptions on $f$, that same function $y \mapsto f(x-y)-f(x+y)$ is also odd symmetric on each interval $y \in (2\pi k, 2\pi(k+1))$ about the value $y = 2\pi k + \pi$, with the function being nonpositive for $y \in (2\pi k, 2\pi k + \pi)$ and nonnegative for $y \in (2\pi k + \pi, 2\pi(k+1))$, where $k$ is any nonnegative integer. As the kernel $K_a(y)$ is strictly decreasing on $(0, \infty)$, it holds
\[
\int_x^{2\pi-x} (f(x-y)-f(x+y))K_a(y)\, dy < 0, \quad \int_{2\pi}^\infty (f(x-y)-f(x+y))K_a(y)\, dy < 0,
\]
from which it follows by (\ref{eq_FTBU_HaIneq1}) that
\begin{align*}\label{eq_FTBU_HaIneq2}
&\paren*{\int_{-\infty}^{-\eps} + \int_\eps^\infty} f(x-y) K_a(y)\, dy \\
&\qquad \le  \paren*{\int_0^{x-\eps} + \int_{x+\eps}^{2x}} f(y)K_a(x-y)\, dy + \int_{2\pi-x}^{2\pi} (f(x-y)-f(x+y))K_a(y)\, dy \\
&\qquad = \paren*{\int_{x-2\pi}^{2x-2\pi} + \int_0^{x-\eps} + \int_{x+\eps}^{2x} + \int_{2\pi}^{x+2\pi} } f(y)K_a(x-y)\, dy. \numberthis
\end{align*}
Integrating by parts and using that the antiderivative of $K_a(y)$ is given by
\[
Q_a(y) := \frac{1}{\pi}\log \frac{|y|}{\sqrt{y^2+a^2}},
\]
one obtains the following equalities:
\begin{align*}
&\int_{x-2\pi}^{2x-2\pi} f(y)K_a(x-y)\, dy = f(x)Q_a(2\pi)-f(2x)Q_a(2\pi-x) + \int_x^{2x} f'(y)Q_a(x-y+2\pi)\, dy, \\
&\int_0^{x-\eps} f(y)K_a(x-y)\, dy = -f(x-\eps)Q_a(\eps) + \int_0^{x-\eps} f'(y)Q_a(x-y)\, dy, \\
&\int_{x+\eps}^{2x} f(y)K_a(x-y)\, dy = f(x+\eps)Q_a(\eps)-f(2x)Q_a(x) + \int_{x+\eps}^{2x} f'(y)Q_a(x-y)\, dy, \\
&\int_{2\pi}^{x+2\pi} f(y)K_a(x-y)\, dy = -f(x)Q_a(2\pi) + \int_0^x f'(y)Q_a(x-y-2\pi)\, dy,
\end{align*}
where we have many times used that $f(2\pi) = f(0) = 0$ and that $f$ and $f'$ are $2\pi$-periodic. Substituting these equalities into (\ref{eq_FTBU_HaIneq2}) and taking the limit of $\eps$ to zero, we obtain that
\begin{align*}
H_af(x) &\le -f(2x)(Q_a(2\pi-x)+Q_a(x)) + \int_0^{2x} f'(y)(Q_a(x-y)+Q_a(x-y+2\pi\sgn(y-x)))\, dy.
\end{align*}
Using the fundamental theorem of calculus on $f(2x)$ gives (\ref{eq_FTBU_HaIneq}) with
\begin{align*}
G_a(x,y) &= Q_a(x-y)-Q_a(x)+Q_a(x-y+2\pi\sgn(y-x))-Q_a(2\pi-x) \\
&= \frac{1}{\pi}\log \abs*{
\frac{x-y}{x}\sqrt{\frac{x^2+a^2}{(x-y)^2+a^2}}
\frac{x-y+2\pi\sgn(y-x)}{2\pi-x}\sqrt{\frac{(2\pi-x)^2+a^2}{(x-y+2\pi\sgn(y-x))^2+a^2}}
} \\
&= \frac{1}{2\pi}\log\paren*{
\frac{1+\frac{a^2}{x^2}}{1+\frac{a^2}{(x-y)^2}}\cdot\frac{1+\frac{a^2}{(2\pi-x)^2}}{1+\frac{a^2}{(x-y+2\pi\sgn(y-x))^2}}},
\end{align*}
as desired.
\end{proof}

By appropriately estimating $G_a(x,y)$ in (\ref{eq_FTBU_HaIneq}), we are able to conclude the proof.

\begin{proof}[Proof of \Cref{prop_FTBU_HaInequality}]
Fix $a > 0$ and $0 < x \le \pi/2$. By hand, one can compute the derivative in $y$ of the argument of the logarithm defining $G_a(x,y)$ in (\ref{eq_FTBU_HaIneq}). Immediately, one verifies that the derivative is positive for each choice of $y \in (x,2x]$ and negative for each choice of $y \in [0, x)$. It follows that $G_a(x, \cdot)$ is strictly increasing on $(x,2x]$ and strictly decreasing on $[0, x)$. Since $G_a(x,0) = G_a(x,2x) = 0$, we have that $G_a(x,y) \le 0$ for each $y \in [0,2x]$, with a singularity of negative infinity attained at $y = x$. Therefore, we have that 
\begin{equation}\label{eq_FTBU_GaEstimate1}
G_a(x,y) \le
\begin{cases}
\max\{G_a(x,x/q), G_a(x,qx)\} & \text{if } y \in [x/q, qx] \\
0 & \text{if } y \in [0,x/q) \cup (qx, 2x],
\end{cases}
\end{equation}
for any fixed choice of $q \in (1,2)$.

Keeping $a > 0$ and $1 < q < 2$ fixed, our next claim is that
\begin{equation}\label{eq_FTBU_GaEstimate2}
\max\{G_a(x,x/q), G_a(x,qx)\} = G_a(x,qx),
\end{equation}
for all $0 < x \le \pi/2$. To establish (\ref{eq_FTBU_GaEstimate2}), one observes that
\[
G_a(x,qx)-G_a(x,x/q) = \frac{1}{2\pi}\log
\paren*{\frac{1+\frac{(qa)^2}{(q-1)^2x^2}}{1+\frac{a^2}{(q-1)^2x^2}} \cdot \frac{1+\frac{(qa)^2}{(2\pi q + (1-q)x)^2}}{1+\frac{a^2}{(2\pi+(1-q)x)^2}}}.
\]
Splitting the above expression into a sum (and difference) of logarithms, differentiating it in $x$, and performing elementary (if somewhat lengthy) estimates, one can verify by hand that the resulting derivative is negative for $x \in (0, 2\pi/(q-1))$. It follows that $G_a(x,qx)-G_a(x,x/q)$ is strictly decreasing in $x$ on the interval $(0, 2\pi/(q-1))$. Moreover, one quickly checks that
\[
G_a(x_*,qx_*)-G_a(x_*,x_*/q) = 0, \quad x_* := \frac{2\pi q}{(q+1)(q-1)} \in (\pi/2, 2\pi/(q-1)).
\]
Hence, the strict monotonicity of $G_a(x,qx)-G_a(x,x/q)$ immediately implies (\ref{eq_FTBU_GaEstimate2}).

Collecting (\ref{eq_FTBU_GaEstimate1}), (\ref{eq_FTBU_GaEstimate2}), and \Cref{lem_FTBU_HaEstimate}, we have by the monotonicity of $f$ on $[0, \pi]$ that
\[
H_af(x) \leq G_a(x,qx)(f(qx) - f(x/q)),
\]
for any $0 < x \le \pi/2$. It follows that
\[
-\int_0^{\pi/2} \frac{H_a f(x) f'(x)}{x^{\sigma}}\, dx \ge  \int_0^{\pi/2} \frac{-G_a(x,qx)(f(qx) - f(x/q)) f'(x)}{x^{\sigma}}\, dx.
\]
A final direct computation (along the same lines as those prior) shows that $-G_a(x,qx)$ is strictly decreasing in $x$ on $[0,2\pi/q]$ with $-G_a(2\pi/q, 2\pi) = 0$. Therefore, $-G_a(\pi/2,q\pi/2) > 0$, regardless of the fixed choice of $a > 0$ and $1 < q < 2$. Once more using the monotonicty of $f$ on $[0,\pi]$, we obtain
\[
-\int_0^{\pi/2} \frac{H_a f(x) f'(x)}{x^{\sigma}}\, dx \ge -G_a(\pi/2,q\pi/2)\int_0^{\pi/2} \frac{(f(qx) - f(x/q)) f'(x)}{x^{\sigma}}\, dx.
\]
As we chose an arbitrary $q \in (1,2)$, our proof is concluded by the following fact from the proof of Proposition 6.4 in \cites{K18}:
\[
\int_0^{\pi/2} \frac{(f(qx) - f(x/q)) f'(x)}{x^{\sigma}}\, dx \ge \widetilde{C}_{q,\sigma} \int_0^{\pi/2} \frac{f(x)^2}{x^{1+\sigma}}\, dx,
\]
where $\widetilde{C}_{q,\sigma} > 0$ depends only on $q$ and $\sigma$. Indeed, (\ref{eq_FTBU_KeyIneq}) holds with $C_{a,\sigma} := -\widetilde{C}_{q,\sigma} G_a(\pi/2, q\pi/2) > 0$, where we recall that we can fix any $q \in (1,2)$ independent of $f$, $a$, and $\sigma$.
\end{proof}

\begin{remark}\label{rem_FTBU_FiniteEnergy}
One can follow the proof of \Cref{prop_FTBU_HaInequality} to establish an analogous result on the real line for smooth functions $f$ that are even, nonnegative, and bounded on all of $\R$, with $f(0) = 0$ and $f' \ge 0$ on $(0,\infty)$. In this setting, one sees that the term
\[
\int_x^\infty (f(x-y)-f(x+y))K_a(y)\, dy
\]
from (\ref{eq_FTBU_HaIneq1}) is indeed negative, which leads to a simpler form of the function $G_a(x,y)$. Overall, this leads to a finite time blow-up result on the real line with smooth initial data $\rho_0$ satisfying the same assumptions stated for $f$ above. We note that such initial data $\rho_0$ do not have finite energy, meaning they are not in $L^2(\R)$ (even though they may correspond to finite-energy blow-up data for the CCF equation (\ref{eq_const_CCF}) according to our discussion in \Cref{sec_const3}). However, one can consider smoothly cutting off such an initial data $\rho_0$ at a large spatial scale, so that it becomes compactly supported. Then, as (\ref{eq_intro_1DIPM}) is a transport equation with finite speed of propogation, the smooth compactly supported modified data should still undergo singularity formation in finite time.
\end{remark}

%%%%%%%%%%%%%%%%%%%%%%%%%%%%%%%%%%%%%%%%%%%%%%%%%%%%%%%
%% ACKNOWLEDGMENTS
%%%%%%%%%%%%%%%%%%%%%%%%%%%%%%%%%%%%%%%%%%%%%%%%%%%%%%%
\section*{Acknowledgements}

AK has been partially supported by the NSF-DMS grant 2306726. NS has been partially supported by the NSF-DMS grant 2038056.

%%%%%%%%%%%%%%%%%%%%%%%%%%%%%%%%%%%%%%%%%%%%%%%%%%%%%%%
%% REFERENCES
%%%%%%%%%%%%%%%%%%%%%%%%%%%%%%%%%%%%%%%%%%%%%%%%%%%%%%%

\end{document}